\begin{document}

\renewcommand{\AA}{\mathcal{A}}
\newcommand{\BB}{\mathcal{B}}
\newcommand{\CC}{\mathcal{C}}
\newcommand{\DD}{\mathcal{D}}
\newcommand{\EE}{\mathcal{E}}
\newcommand{\FF}{\mathcal{F}}
\newcommand{\GG}{\mathcal{G}}
\newcommand{\HH}{\mathcal{H}}
\newcommand{\II}{ I\hspace{-0.5mm}I}
\newcommand{\JJ}{\mathcal{J}}
\newcommand{\KK}{\mathcal{K}}
\newcommand{\LL}{\mathcal{L}}
\newcommand{\MM}{\mathcal{M}}
\newcommand{\NN}{\mathcal{N}}
\newcommand{\OO}{\mathcal{O}}
\newcommand{\PP}{\mathcal{P}}
\newcommand{\QQ}{\mathcal{Q}}
\newcommand{\RR}{\mathcal{R}}
\renewcommand{\SS}{\mathcal{S}}
\newcommand{\TT}{\mathcal{T}}
\newcommand{\UU}{\mathcal{U}}
\newcommand{\VV}{\mathcal{V}}
\newcommand{\WW}{\mathcal{W}}
\newcommand{\XX}{\mathcal{X}}
\newcommand{\YY}{\mathcal{Y}}
\newcommand{\ZZ}{\mathcal{Z}}


\newcommand{\A}{\mathbb{A}}
\newcommand{\B}{\mathbb{B}}
\newcommand{\C}{\mathbb{C}}
\newcommand{\D}{\mathbb{D}}
\newcommand{\E}{\mathbb{E}}
\newcommand{\F}{\mathbb{F}}
\newcommand{\G}{\mathbb{G}}
\renewcommand{\H}{\mathbb{H}}
\newcommand{\I}{\mathbb{I}}
\newcommand{\J}{\mathbb{J}}
\newcommand{\K}{\mathbb{K}}
\renewcommand{\L}{\mathbb{L}}
\newcommand{\M}{\mathbb{M}}
\newcommand{\N}{\mathbb{N}}
\renewcommand{\O}{\mathbb{O}}
\renewcommand{\P}{\mathbb{P}}
\newcommand{\Q}{\mathbb{Q}}
\newcommand{\R}{\mathbb{R}}
\renewcommand{\S}{\mathbb{S}}
\newcommand{\T}{\mathbb{T}}
\newcommand{\U}{\mathbb{U}}
\newcommand{\V}{\mathbb{V}}
\newcommand{\W}{\mathbb{W}}
\newcommand{\X}{\mathbb{X}}
\newcommand{\Y}{\mathbb{Y}}
\newcommand{\Z}{\mathbb{Z}}


\newcommand{\al}{\alpha}
\newcommand{\be}{\beta}
\newcommand{\ga}{\gamma}
\newcommand{\de}{\delta}
\newcommand{\ep}{\varepsilon}
\newcommand{\ze}{\zeta}
\newcommand{\et}{\eta}
\newcommand{\vth}{\vartheta}
\renewcommand{\th}{\theta}
\newcommand{\io}{\iota}
\newcommand{\ka}{\kappa}
\newcommand{\la}{\lambda}
\newcommand{\rh}{\rho}
\newcommand{\si}{\sigma}
\newcommand{\ta}{\tau}
\newcommand{\up}{\upsilon}
\newcommand{\ph}{\varphi}
\newcommand{\ch}{\chi}
\newcommand{\ps}{\psi}
\newcommand{\om}{\omega}

\newcommand{\Ga}{\Gamma}
\newcommand{\De}{\Delta}
\newcommand{\Th}{\Theta}
\newcommand{\La}{\Lambda}
\newcommand{\Si}{\Sigma}
\newcommand{\Up}{\Upsilon}
\newcommand{\Ph}{\xi}
\newcommand{\Om}{\Omega}

\newcommand{\inj}{\hookrightarrow}
\newcommand{\stetein}{\overset{s}{\hookrightarrow}}
\newcommand{\dichein}{\overset{d}{\hookrightarrow}}
\newcommand{\pa}{\partial}
\newcommand{\re}{\restriction}
\newcommand{\tief}{\downharpoonright}

\newcommand{\bra}{\langle}
\newcommand{\ket}{\rangle}
\newcommand{\bs}{\backslash}
\newcommand{\divv}{\operatorname{div}}
\newcommand{\Dt}{\frac{\mathrm d}{\mathrm dt}}

\newcommand{\sm}{\setminus}
\newcommand{\es}{\emptyset}

\newtheorem{theorem}{Theorem}
\newtheorem{corollary}{Corollary}
\newtheorem*{main}{Main Theorem}
\newtheorem{lemma}[theorem]{Lemma}
\newtheorem{proposition}{Proposition}
\newtheorem{conjecture}{Conjecture}
\newtheorem*{problem}{Problem}
\theoremstyle{definition}
\newtheorem{definition}[theorem]{Definition}
\newtheorem{remark}{Remark}
\newtheorem*{notation}{Notation}

\newcommand{\cqfd}{\begin{flushright}\vspace*{-3mm}$\Box $\vspace{-2mm}\end{flushright}}
\newcommand{\saut}{\vspace*{1mm}\\ \nd }
\newcommand{\on}{\mbox{ on }}
\newcommand{\with}{\mbox{ with }}
\newcommand{\nd}{\noindent}
\newcommand{\eps}{\varepsilon}

\sloppy
\title[]
{Global existence for diffusion-electromigration systems in space dimension three and higher}
 \author[D. Bothe]{Dieter Bothe}
 \address{Center of Smart Interfaces, Technische Universit\"at Darmstadt, Alarich-Weiss-Str. 10, 64287 Darmstadt, Germany}
 \email{bothe@csi.tu-darmstadt.de}
 \author[A. Fischer]{Andr\'e Fischer}
 \address{Center of Smart Interfaces, Technische Universit\"at Darmstadt, Alarich-Weiss-Str. 10, 64287 Darmstadt, Germany}
 \email{fischer@csi.tu-darmstadt.de}
 \author[M. Pierre]{Michel Pierre}
 \address{Ecole Normale Sup\'erieure de Rennes, IRMAR, UEB, Campus de Ker Lann, 35170 Bruz, France}
 \email{michel.pierre@ens-rennes.fr}

 \author[G. Rolland]{Guillaume Rolland}
 \address{Ecole Normale Sup\'erieure de Rennes, IRMAR, UEB, Campus de Ker Lann, 35170 Bruz, France}
 \email{guillaume.rolland@ens-rennes.fr}

\date{\today}
\thispagestyle{empty}
%
\maketitle
\begin{abstract}
We prove existence of global weak solutions for the Nernst-Planck-Poisson problem which describes the evolution of concentrations of charged species $X_1,\ldots,X_P$ subject to Fickian diffusion and chemical reactions in the presence of an electrical field, including in particular the Boltzmann statistics case. In contrast to the existing literature, existence is proved in any dimension. Moreover, we do not need the assumption $P=2$ nor the assumption of equal diffusivities for all $P$ components. Our approach relies on the intrinsic energy structure and on an adequate nonlinear and curiously more regular approximate problem.  The delicate passing to the limit is done in adequate functional spaces which lead to only weak solutions.
\end{abstract}





\noindent {\bf Keywords:} Electromigration, diffusion, Nernst-Planck, global existence, weak solutions  \\[4mm]
{\bf 2000 Mathematics Subject Classification:} 35K51, 35K57,35D30,35Q92

\section{Introduction and Main Results}
Our main goal is to prove global existence of weak solutions in any dimension for the Nernst-Planck-Poisson system (NPP). It describes the evolution of a dilute solution with charged solutes in presence of Fickian diffusion and electromigration. It consists in a coupled system of parabolic and elliptic equations for the unknowns $c_i$, $\Phi$, where $c_i$ denotes the concentration of species $X_i$ and $\Phi$ the electrical potential. In this model, the total mass flux of $X_i$ is given by
\[J_i=j_i^d+j_i^e=-d_i\nabla c_i-d_i\tfrac{F}{RT}z_ic_i\nabla\Phi,\]
where $j_i^d$ and $j_i^e$ represent the diffusional and the migrational part of the flux, respectively. The parameters $F,R,T>0$ denote the Faraday constant, the ideal gas constant and the (constant) temperature, respectively, and $z_i\in\Z$ represents the charge number of species $X_i$. In general, the diffusivities $d_i>0$ depend on the full composition of the system, see e.g.~\cite{cussler}.

The bulk equations for concentrations $c_i$ are given by
\begin{equation}\label{NPP1}
 \partial_tc_i +\textrm{div}(-d_i\nabla c_i - d_i\frac{F}{RT}z_i c_i\nabla \Phi)=f_i(c),\qquad\ t>0,\ x\in\Omega,\quad i=1,\ldots,P,
\end{equation}
where $\Omega\subset\R^N$, supplemented with no-flux boundary conditions and initial conditions, i.e.
\begin{equation}\label{NPP2}
\pa_\nu c_i+\frac{F}{RT}z_ic_i\pa_\nu\Phi=0,\quad t>0,\ x\in\pa\Omega,\qquad c_i(0)=c_i^0,\quad x\in\Omega.
\end{equation}
The right-hand side $f_i(c)$ is assumed to be quasi-positive, i.e.\ $f_i(c)\ge0$ if $c_i=0$, which allows us to expect nonnegative solutions $c_i$, if $c_i^0$ is nonnegative.

The electrical potential $\Phi$ is determined by Maxwell's equation of electro-statics, i.e.
\begin{equation}\label{NPP3}
-\epsilon\Delta \Phi=F\sum_{i=1}^P z_ic_i,\qquad\ t>0,\ x\in\Omega,
\end{equation}
where we assume constant permittivity $\epsilon>0$ of the fluid. Note that the right-hand side represents the charge density within the electrolyte. Coming up with physically reasonable boundary conditions for this problem is a more delicate topic. It seems well-accepted in the mathematical literature to impose homogeneous Dirichlet or Neumann boundary conditions although, as is well-known, the boundary is, in general, charged (see~\cite{Newman}) which is not accounted for in those frameworks. This is the reason why we work with the inhomogeneous Robin boundary condition
\begin{equation}\label{NPP4}
\pa_\nu\Phi+\tau\Phi=\xi,\qquad t>0,\ x\in\pa\Omega.
\end{equation}
This condition can be motivated by considering the boundary locally as a plate capacitor. The parameter $\tau>0$ can be viewed as the capacity of the boundary and $\xi$ (a given datum to the problem) refers to an external potential as well as a boundary charge. For a more detailed discussion in this respect, we refer to~\cite{BFS12}.  Actually, we will assume that $\tau=\tau(x)\not \equiv 0$ so that pure Neumann boundary conditions may be considered on some parts of the boundary.

The NPP-model goes back to the fundamental works of W.~Nernst and M.~Planck, see~\cite{Ne89,Pl90a,Pl90b}. Typical situations which are captured by this model comprise semiconductors, electrolytes, nano-filtration processes, ion channels, etc., see e.g.~\cite{Selberherr,Newman,BothePruss,SNE}. A quite recent discussion on the applicability of NPP in the general case of non-dilute aqueous solutions is contained in~\cite{dreyer12}. There, the authors discuss the necessity of incorporating the mechanical pressure into the model. Indeed, for the case of a non-dilute fluid more general models are required in order to assure thermodynamic consistency, see also~\cite{deGrootMazur-book}. For a justification of NPP in the dilute case, we refer to~\cite{BFS12} where a detailed account on the connection of NPP to the more general Maxwell-Stefan equations and to the frequently employed ``electro-neutrality condition'' is also given.

Throughout this work, we assume that the diffusivities $d_i$ are functions of space and time only subject to some further mild regularity constraints, where we allow $d_i\ne d_j$ for $i\ne j$. In view of the production terms $f_i$, we restrict ourselves to the case of bounded functions which depend on $t,x,c$; see Remark~\ref{rem:data} for further explanation and motivation. 

{\em The purpose of the present article is to show existence of global weak solutions} to system~(\ref{NPP1})-(\ref{NPP4}) for bounded domains $\Omega\subset\R^N$ with $C^2$-boundary, where $N\ge1$. In general, the concentrations $c_i$ turn out to only have local $W^{1,1}$-regularity in space. For the physical case $N=3$, however, we are able to reveal global $W^{1,\frac32}$-regularity.
We do not address the uniqueness question which is an open problem in this weak setting.

In the 1990s, the Nernst-Planck-Poisson system has attracted quite some attention in mathematical research; see, e.g., \cite{Biler92,BD00,BHN94,CL92,CL_multidim,gaj85,GG96,GGH95,Glitzky,Glitzky_2}. Let us give a brief account on those results and show their relation to ours.

In most cases, a bounded smooth domain $\Omega\subset\R^N$, $N=2,3$, is considered. In case of $N=2$ well-posedness and long-time behaviour are well-understood: in \cite{BHN94}, existence and uniqueness of global weak solutions is shown, as well as convergence to uniquely determined steady states. For sufficiently smooth data, it is proved in \cite{CL_multidim} that there is a unique global classical solution. These results have been improved in \cite{BD00} by computing an explicit exponential rate of convergence with the help of logarithmic Sobolev inequalities. In papers of H.~Gajewski, A.~Glitzky, K.~Gr\"oger, R.~H\"unlich, \cite{gaj85,gaj86,GG96,GGH95,Glitzky,Glitzky_2}, the authors supplement the model with quite general reaction terms coming from mass-action kinetics chemistry subject to natural growth assumptions, and prove global well-posedness and exponential convergence to uniquely determined steady states.

Already the three-dimensional setting causes severe difficulties so that global existence (resp.\ well-posedness) has only been shown under additional assumptions so far. Typical examples for such further assumptions are:
\begin{enumerate}[label=$(\roman{*})$]
\item The initial data is sufficiently close to steady states, see \cite{BHN94};
\item The  a priori estimate $\sup_{t>0}\|c(t)\|_{L^2(\Omega)}<\infty$ holds, see e.g.~\cite{CL_multidim};
\item All charge numbers have the same sign, i.e.\ either $z_i\ge0$ or $z_i\le0$, see~\cite{GG96};
\item\label{ass:2:comp} There are only two components involved, i.e.~$P=2$, see~\cite{GG96};
\item The diffusional fluxes have a similar structure as above, with $\nabla c_i$ replaced by
the more general $c_i \, \nabla \mu_i$, where $\mu_i = \mu_i(c_i/c_i^\ast)$ is the chemical potential with a reference concentration $c_i^\ast$,
but the growth of the function $s\to \mu_i^{-1} (s)$ is at most like $s^{\gamma_i}$ with
$0\leq \gamma_i <\frac{4}{N-2}$ (see \cite{GS} and also \cite{GG96}). This does not include the less regular model studied here as soon as $N\geq 3$. 
In fact, this polynomial growth of
$\mu_i^{-1} (s)$ turns out to be satisfied for chemical potentials corresponding to so-called Fermi-Dirac
statistics. It is not satisfied in the case treated here which includes the important case of Boltzmann statistics with chemical potential of the type $\mu_i (s) = \mu_i^0 + RT \ln (c_i/c_i^\ast)$ and which potentially leads to quite less  regular solutions.

From the modelization point of view, let us emphasize that the Fermi-Dirac statistics is relevant for the transport of electrons in
metals or semiconductors, where quantum effects need to be accounted for, while the classical
Boltzmann statistics yields a very good description for the transport on ionic species in solutions (electrolytes).
\end{enumerate}

None of the above just stated assumptions will be imposed in our setup. Note also that most of the mathematical references cited above [except in particular \cite{GS}, \cite{GG96}]
consider the case of constant diffusivities $d_i$, which is not needed in our approach either. The advantage of this generality lies in the potential to tackle related quasilinear problems, where $d_i$ may also depend on $c$, by means of fixed-point methods, thus approaching more complete physical situations.

During the last decade, concerning the mathematical analysis of related models,
NPP has been complemented by the Navier-Stokes equations (NS) modeling the fluid flow; see, e.g., \cite{BFS12,DZC11,FS13,JS09,Ryham,Schmuck}. Partly due to the fact that NS itself is unknown to be well-posed in three-dimensional domains without further assumptions on the initial data, the results on NSNPP are similar to the ones in case of pure NPP. Without going into more detail on NSNPP here, let us point out that, apart from~\cite{BFS12,FS13}, assumption~\ref{ass:2:comp} is imposed throughout, which simplifies the situation considerably. This is why our approach to global well-posedness is also of interest to the more complicated situation of NSNPP with $P$ species.

Before stating the main results of this article, let us fix some conventions and notations. For convenience we set the parameters $F,R,T,\epsilon=1$; it is easy to check that our results remain true in the general case, where those parameters are positive constants. 

Throughout the paper, $\Omega$ denotes an open bounded and connected subset of  $\R^N$ with $C^2$-boundary $\pa\Omega$. 

Time-space cylinders are written as $Q_T=(0,T)\times\Omega$ and $\Sigma_T=(0,T)\times\pa\Omega$ for $T\in(0,\infty)$. 
We write $C^\infty_0(\R^N)$ for the space of smooth functions with compact support defined on $\R^N$. Note that $C^\infty(\overline\Omega)=\{v_{|\Omega}|v\in C^\infty_0(\R^N)\}$. Positive cones of nonnegative functions will be denoted by $L^p(\Omega)^+, C^\infty_0(\R^N)^+$, etc.,
and $\R^P_+$ is short for $(\R^+)^P$.

For the data to~(\ref{NPP1})-(\ref{NPP4}), we assume the following

\begin{equation}
\left.
\begin{array}{l}
\label{hyp:first} 
d_i\in L^\infty_{loc}([0,\infty);L^\infty(\Omega))\;{\rm  and \;for\; all\;} T>0,\\ \;{\rm there\; are\; numbers \;}\underline{d}(T), \overline{d}(T) \;{\rm such\; that\;}\\

0<\underline{d}(T)\leq d_i(t,x)\leq\overline{d}(T)<+\infty\mbox{ for }(t,x)\in Q_T.

\end{array}
\right\}
\end{equation}

\begin{equation}\label{hyp:f} 
\left.
\begin{array}{l}
f_i\in C([0,+\infty)\times\overline\Omega\times\R^P_+) \;{\rm with}\\
\hspace{0.3cm} (i)\; |f_i(t,x,y)|\leq C \;{\rm  for \;all\;} (t,x,y)\in[0,\infty)\times\overline\Omega\times\R^P,\\
\hspace{0.3cm} (ii)\; \pa_{y_j}f_i\in C([0,+\infty)\times\overline\Omega\times\R^P_+)
, j=1,\ldots,P,\\
\hspace{0.3cm} (iii)\;f_i(t,x,y)\ge0 \;{\rm if\;} y_i = 0, \;{\rm i.e. \;}f \;{\mbox{\rm is quasi-positive}}.
\end{array}
\right\}
\end{equation}

\begin{equation}\label{hyp:ic}
\left.
\begin{array}{l}
\hspace{0.2cm} (i)\;\tau\in L^\infty(\pa\Omega)^+\; {\rm with\;} \tau\not\equiv0;\\
\hspace{0.2cm} (ii)\; \xi \in L^2(\pa\Omega)\;{\mbox{\rm is a time-independent function;}}\\
\hspace{0.2cm} (iii)\; c^0\in L^p(\Omega)^+\;{\rm for\; some\;} p\in[2,\infty) \;{\rm with \;} p> N/2.
\end{array}
\right\}
\end{equation}

Instead of $f_i(t,x,y)$, we will often merely write $f_i(y)$.

Our first existence result then reads as follows.

\begin{theorem}\label{th1}
Under assumptions (\ref{hyp:first})-(\ref{hyp:ic}), there exists $(c,\Phi)\colon[0,\infty)\to [L^1(\Omega)^+]^N\times W^{1,2}(\Omega))$ such that (\ref{NPP1})-(\ref{NPP4}) is satisfied in the following sense:
\begin{enumerate}[label=$(\roman{*})$]
 \item For all $T>0$
\begin{align*}
&c_i\in C([0,T];L^1(\Omega))\cap L^1(0,T;W^{1,1}_{loc}(\Omega)),\\
&\Phi\in L^\infty(0,T;W^{1,2}(\Omega))\cap L^2(0,T;W_{loc}^{2,2}(\Omega)),\\
&d_i\nabla c_i +d_iz_ic_i\nabla \Phi\in L^1(Q_T).
\end{align*}
\item For all $\psi\in C^\infty(\overline{Q_T})$ such that $\psi(T)=0$
\begin{equation}\label{eq:th1:1}
 \int_{Q_T}-c_i\partial_t\psi +(d_i\nabla c_i +d_iz_ic_i\nabla \Phi)\nabla\psi =\int_{\Omega}c_i^0 \psi(0)+\int_{Q_T}f_i(c)\psi.
\end{equation}
\item For all $\varphi\in C^\infty(\overline{\Omega})$, for a.e. $t\in \R_+$,
\begin{equation}\label{eq:th1:2}
 \int_\Omega \nabla \Phi(t)\nabla \varphi +\int_{\partial\Omega}(\tau \Phi(t)-\xi)\varphi =\int_\Omega \left(\sum_{i=1}^P z_ic_i(t)\right)\varphi.
\end{equation}
\end{enumerate}
\end{theorem}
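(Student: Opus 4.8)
\noindent The idea is to obtain $(c,\Phi)$ as a limit of solutions of a suitably regularized problem, to extract uniform bounds exclusively from the intrinsic free-energy structure of the system, and to devote the main effort to the passage to the limit, which is delicate because the available estimates are weak. \textbf{Step 1 (an approximate problem).} For $\varepsilon>0$ I would regularize the migration flux by truncating the concentration, replacing $-d_iz_ic_i\nabla\Phi$ by $-d_iz_i\beta_\varepsilon(c_i)\nabla\Phi$ with, say, $\beta_\varepsilon(s)=s/(1+\varepsilon s)$, and (if convenient) mollifying $c^0$; the essential features are that $\beta_\varepsilon(0)=0$ and $0\le\beta_\varepsilon\le 1/\varepsilon$, and that the regularized system still admits a Lyapunov functional. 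For fixed $\varepsilon$ the regularized problem is solved by a fixed point argument: freezing $c$ in the migration coefficients and in $f_i$ decouples it into $P$ linear parabolic equations for the $c_i$ with a no-flux condition and one linear Robin--Poisson equation for $\Phi$; $L^p$-maximal regularity and Schauder's theorem give a local-in-time solution, which is global by the a priori bounds of Step~2. Nonnegativity of $c_i^\varepsilon$ follows from the quasi-positivity of $f_i$ together with $\beta_\varepsilon(0)=0$, by testing the $i$-th equation with $(c_i^\varepsilon)^-$.

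\textbf{Step 2 (uniform a priori estimates).} Integrating the $i$-th equation over $\Omega$, using the no-flux condition and $|f_i|\le C$, gives $\|c_i^\varepsilon(t)\|_{L^1(\Omega)}\le\|c_i^0\|_{L^1(\Omega)}+Ct$, hence a uniform $L^\infty(0,T;L^1(\Omega))$ bound. Testing the $i$-th equation with $\ln c_i^\varepsilon+z_i\Phi^\varepsilon$ and the Poisson equation with $\pa_t\Phi^\varepsilon$, and summing over $i$, yields the free-energy identity for
\[\mathcal E(c,\Phi)=\sum_{i=1}^{P}\int_\Omega(c_i\ln c_i-c_i+1)+\tfrac12\int_\Omega|\nabla\Phi|^2+\tfrac12\int_{\pa\Omega}\tau\Phi^2-\int_{\pa\Omega}\xi\Phi,\]
namely $\tfrac{d}{dt}\mathcal E(c^\varepsilon,\Phi^\varepsilon)+\sum_i\int_\Omega d_i c_i^\varepsilon|\nabla(\ln c_i^\varepsilon+z_i\Phi^\varepsilon)|^2\le\sum_i\int_\Omega f_i(c^\varepsilon)(\ln c_i^\varepsilon+z_i\Phi^\varepsilon)$, the $\varepsilon$-contributions having the right sign. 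Since $c^0\in L^p(\Omega)$ with $p\ge2$ and $p>N/2$, the initial entropy is finite and the corresponding $\Phi^0$ lies in $W^{1,2}(\Omega)$, so $\mathcal E(c^0,\Phi^0)<\infty$; the right-hand side is absorbed using $|f_i|\le C$, $s\ln s\ge-1/e$, the $L^1$ bound just obtained and the $L^1(\pa\Omega)$ bound on $\Phi^\varepsilon$ coming from the Robin--Poisson problem with $L^1$ data. Gronwall's lemma then provides, uniformly in $\varepsilon$, a bound for $\Phi^\varepsilon$ in $L^\infty(0,T;W^{1,2}(\Omega))$ and for $\sum_i\int_{Q_T}d_ic_i^\varepsilon|\nabla(\ln c_i^\varepsilon+z_i\Phi^\varepsilon)|^2$. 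From the dissipation and $\nabla\Phi^\varepsilon\in L^2$ one gets $\sqrt{c_i^\varepsilon}$ bounded in $L^2(0,T;W^{1,2}(\Omega))$; interpolating with $L^\infty(0,T;L^1(\Omega))$ bounds $c_i^\varepsilon$ in $L^{(N+2)/N}(Q_T)$, and writing $\nabla c_i^\varepsilon=2\sqrt{c_i^\varepsilon}\,\nabla\sqrt{c_i^\varepsilon}$ gives a bound for $\nabla c_i^\varepsilon$ in $L^1(0,T;W^{1,1}(\Omega))$ together with equi-integrability of $\{\nabla c_i^\varepsilon\}$ (where $|\nabla c_i^\varepsilon|$ is large, either $c_i^\varepsilon$ or $|\nabla\sqrt{c_i^\varepsilon}|$ is large, and both are controlled). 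Reinserting $\sum z_ic_i^\varepsilon\in L^{(N+2)/N}(Q_T)$ into the Robin--Poisson problem and using elliptic regularity up to the $C^2$-boundary bounds $\Phi^\varepsilon$ in $L^{(N+2)/N}(0,T;W^{2,(N+2)/N}(\Omega))$, whence, after a short bootstrap, the migration flux and hence the whole flux $J_i^\varepsilon=d_i\nabla c_i^\varepsilon+d_iz_i\beta_\varepsilon(c_i^\varepsilon)\nabla\Phi^\varepsilon$ is bounded and equi-integrable in $L^1(Q_T)$. When $N=3$, a sharper use of the same estimates together with $c^0\in L^p$, $p>3/2$, upgrades the local $W^{1,1}$ bound to a global $W^{1,3/2}(Q_T)$ bound and gives $\Phi^\varepsilon\in L^2(0,T;W^{2,2}_{loc}(\Omega))$.

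\textbf{Step 3 (compactness and passage to the limit).} Together with $\pa_tc_i^\varepsilon=-\divv J_i^\varepsilon+f_i(c^\varepsilon)$ bounded in $L^1(0,T;(W^{1,\infty})^*)+L^\infty(Q_T)$, the above bounds allow an Aubin--Lions--Simon argument: $\sqrt{c_i^\varepsilon}$ is relatively compact in $L^2(Q_T)$, so along a subsequence $c_i^\varepsilon\to c_i$ strongly in $L^1(Q_T)$ and a.e.; by equi-integrability and Dunford--Pettis, $\nabla c_i^\varepsilon\rightharpoonup\nabla c_i$ weakly in $L^1_{loc}(Q_T)$ (and in $L^{3/2}(Q_T)$ when $N=3$); $\Phi^\varepsilon\rightharpoonup\Phi$ weakly-$*$ in $L^\infty(0,T;W^{1,2}(\Omega))$, and elliptic regularity combined with the strong convergence of $c^\varepsilon$ gives $\nabla\Phi^\varepsilon\to\nabla\Phi$ strongly in $L^2_{loc}(Q_T)$ and a.e. Consequently $\beta_\varepsilon(c_i^\varepsilon)\nabla\Phi^\varepsilon\to c_i\nabla\Phi$ in $L^1(Q_T)$, and passing to the limit in the weak formulations of the regularized problem yields (ii) and (iii); the regularity claims in (i), including $J_i\in L^1(Q_T)$, are inherited from the uniform bounds. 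Finally, the uniform bound on $\pa_tc_i^\varepsilon$ and the $L^1$-compactness yield $c_i\in C([0,T];L^1(\Omega))$ (using also that $t\mapsto\|c_i(t)\|_{L^1}=\int_\Omega c_i(t)$ is continuous, by nonnegativity and the integrated equation), which makes sense of $c_i(0)=c_i^0$.

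\textbf{Expected main obstacle.} The crux is the passage to the limit in the genuinely nonlinear, low-regularity migration term $c_i^\varepsilon\nabla\Phi^\varepsilon$ and the simultaneous identification of the $L^1$-weak limit of $\nabla c_i^\varepsilon$: both rely on the equi-integrability furnished by the entropy dissipation, on strong $L^1$-compactness of the concentrations, and on sufficient boundary regularity of $\Phi^\varepsilon$. Already showing that the limiting flux $d_i\nabla c_i+d_iz_ic_i\nabla\Phi$ lies in $L^1(Q_T)$ when $N\ge3$ requires a careful elliptic bootstrap --- which is exactly why only $W^{1,1}_{loc}$ (respectively $W^{1,3/2}$ for $N=3$) regularity is obtained --- and a secondary difficulty is to design the approximate problem so that it is at once globally solvable, positivity-preserving, and compatible with the free-energy structure.
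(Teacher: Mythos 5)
Your overall strategy (regularize, derive free-energy bounds, pass to the limit by Aubin--Simon compactness plus weak $L^1$ convergence of the fluxes) mirrors the paper's, but there is a genuine gap at the very point the theorem is delicate. In Step 2 you assert that ``from the dissipation and $\nabla\Phi^\varepsilon\in L^2$ one gets $\sqrt{c_i^\varepsilon}$ bounded in $L^2(0,T;W^{1,2}(\Omega))$.'' The dissipation only bounds the combined square $\int_{Q_T} d_i c_i^\varepsilon|\nabla(\ln c_i^\varepsilon+z_i\Phi^\varepsilon)|^2$; to extract $\int_{Q_T}|\nabla\sqrt{c_i^\varepsilon}|^2$ you must control the cross term $\int_{Q_T}z_i\nabla c_i^\varepsilon\cdot\nabla\Phi^\varepsilon$ (equivalently $\int_{Q_T}c_i^\varepsilon|\nabla\Phi^\varepsilon|^2$), and this is \emph{not} controlled by $c^\varepsilon\in L^\infty(0,T;L^1)$ together with $\nabla\Phi^\varepsilon\in L^\infty(0,T;L^2)$. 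The only available route is integration by parts, which produces the good term $\int|\Delta\Phi^\varepsilon|^2$ but also the boundary term $\int_{\Sigma_T}\sum_i z_ic_i^\varepsilon(\xi-\tau\Phi^\varepsilon)$, and this boundary term cannot be absorbed in general dimension: that is exactly why the paper multiplies the dissipation by a cut-off $\zeta^2$ with compact support (hence only $W^{1,1}_{loc}$ regularity in Theorem~\ref{th1}) and reserves the global-in-space estimate for $N=3$ with $\xi\in L^q(\pa\Omega)$, $q>2$ (Theorem~\ref{th2}, via the trace embedding \eqref{W12embedd}, which becomes sharp at $N=4$, cf.\ Remark~\ref{rem:3d}). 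Your claimed global bounds for $\sqrt{c_i^\varepsilon}$ and for $\nabla c_i^\varepsilon$ in every dimension would prove strictly more than the theorems of the paper and contradict this known obstruction, so the estimate as stated cannot stand; everything downstream (equi-integrability of the flux, the elliptic bootstrap for $c_i\nabla\Phi$) rests on it.

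Two further points. First, truncating only the drift by $\beta_\varepsilon(c_i)=c_i/(1+\varepsilon c_i)$ breaks the perfect-square structure: testing with $\ln c_i+z_i\Phi$ gives cross terms weighted by $1+\beta_\varepsilon(c_i)/c_i$ that do not recombine into $d_ic_i|\nabla(\ln c_i+z_i\Phi)|^2$ and have no sign, so ``the $\varepsilon$-contributions having the right sign'' is unjustified; the paper instead perturbs the \emph{diffusion} via $h(r)=r+\eta r^p$ with the compatibility $h'(r)=r\psi''(r)$, which both preserves the Lyapunov structure and yields an $L^\infty(0,T;L^p)$ bound on $c$ (hence $\Phi\in L^\infty(0,T;W^{1,\infty})$), the ingredient that makes the fixed-point argument close. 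Second, $c_i\in C([0,T];L^1(\Omega))$ does not follow from the bound on $\partial_t c_i^\varepsilon$ plus continuity of the total mass: those only give continuity into $W^{-1,1}(\Omega)$ and, with the $c\log c$ bound, weak-$L^1$ continuity in time; weak $L^1$ convergence of nonnegative functions together with convergence of their $L^1$ norms does not imply strong convergence. The paper needs a separate truncation argument (the functions $T_k$ in Step 4 of Section~\ref{S4}) to upgrade to strong $L^1$ continuity, and your proposal is missing an argument of this kind.
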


Let us sketch the main ideas of the proof.
It is well-known that there exists - at least formally - a nonnegative free energy functional which grows at most exponentially along solutions to the Nernst-Planck-Poisson system; it is given by
\[V_0(t)=\sum_{i=1}^N\int_\Omega \big( c_i\log c_i-c_i+1\big)
+\frac12\bigg(\int_\Omega|\nabla\Phi|^2+\int_{\pa\Omega}\tau|\Phi|^2\bigg).\]
From a physical viewpoint, $V$ can be interpreted as a modified Gibbs free energy with chemical and electrical contributions. Its formal dissipation rate can be estimated by
\begin{equation}\label{eq:diss:rate}
\Dt V_0(t)\le-\sum_{i=1}^N\int_\Omega\frac1{d_ic_i}|d_i\nabla c_i+d_iz_ic_i\nabla\Phi|^2+C\big(1+V_0(t)\big),
\end{equation}
where $C$ is some constant depending on the bounds of $f_i$. Remark that dissipation is present in case of non-vanishing mass fluxes $J_i=-d_i\nabla c_i-d_iz_ic_i\nabla\Phi$. Integrating the dissipation rate over $0$ to $T$ in time, we directly obtain a natural a priori bound for the solution~$(c,\Phi)$. In general, no further energy estimates are at hand; see e.g. \cite{CL_multidim}. Expanding the square under the integral then gives
\begin{equation}\label{eq:expl:diss}
\sum_{i=1}^N\int_{Q_T}d_i\left(\frac{|\nabla c_i|^2}{c_i}+z_i^2c_i|\nabla\Phi|^2+2z_i\nabla c_i\nabla\Phi\right)\leq C,
\end{equation}
where $C>0$ depends on $T$, the bounds on $f_i,\tau,\xi$ as well as the initial data $c^0$, since $V_0$ is bounded from below. Having a lower bound on $d_i$ this boundedness carries over to each summand without $d_i$. The first two terms are nonnegative and the third one can be treated by integration by parts:
\begin{equation}\label{eq:NPP:mot:3d}
\int_{Q_T}\sum_{i=1}^Pz_i\nabla c_i\nabla\Phi=-\int_{Q_T}\sum_{i=1}^Pz_ic_i\Delta\Phi+\int_{\Gamma_T}z_ic_i\pa_\nu\Phi.
\end{equation}
Since $-\Delta\Phi=\sum_iz_ic_i$, the first integral on the right-hand side has a positive sign. However, the boundary integral needs to be controlled, which is not possible with common absorption techniques in arbitrary space dimension.

In order to overcome this difficulty, we apply a cut-off technique, i.e.\ we multiply the nonnegative integrand of (\ref{eq:expl:diss}) by a nonnegative test function $\ze^2$ with compact support in order to get rid of the unpleasant boundary terms when integrating by parts. This procedure leads to local compactness inside $\Omega$ which we strongly use to prove existence. But we merely obtain the stated \emph{local} $W^{1,1}$-regularity in space for solutions $c_i$ to~(\ref{NPP1})-(\ref{NPP4}) in the general case.

Returning to the physical situation $N=3$, it turns out that the boundary integral in (\ref{eq:NPP:mot:3d}) can be handled appropriately if one assumes slightly more regularity on $\xi$ than stated in~(\ref{hyp:ic}). So in this case, there is global regularity in space for concentrations $c_i$, as stated in our second main result.
\begin{theorem}\label{th2}
Under assumptions (\ref{hyp:first})-(\ref{hyp:ic}) and $\xi\in L^q(\pa\Omega)$ with $q\in (2,\infty)$, there exists $(c,\Phi)\colon [0,\infty)\to [L^1(\Omega)^+]^N\times W^{1,2}(\Omega)$ such that, in addition to the assertions of Theorem~\ref{th1}, we have 
\begin{align*}
c_i&\in C([0,\infty);L^1 (\Omega))\cap L^1_{\rm loc}([0,\infty);W^{1,\frac32}(\Omega)),\\
\Phi&\in L^\infty_{\rm loc}([0,\infty);W^{1,2}(\Omega))\cap C([0,\infty);L^r(\Omega)),\  r\in[1,6).
\end{align*}
\end{theorem}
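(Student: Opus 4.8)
The plan is to upgrade the proof of Theorem~\ref{th1} by showing that, under the stronger hypothesis $\xi\in L^q(\pa\Omega)$ with $q>2$, the troublesome boundary integral in~(\ref{eq:NPP:mot:3d}) can actually be controlled in dimension $N=3$, so that the cut-off function $\ze$ is no longer needed for the key a priori estimate and one obtains global-in-space gradient bounds. Concretely, I would first revisit the elliptic problem~(\ref{eq:th1:2}): given $c_i(t)\in L^1(\Omega)^+$ with $\sum_i\int_\Omega c_i\log c_i$ bounded (hence, by Gagliardo--Nirenberg/Sobolev-type arguments combined with the $L^\infty(W^{1,2})$ bound on $\Phi$, some better integrability of $\sum_i z_ic_i$), one has $-\Delta\Phi=\sum_i z_ic_i$ with the Robin condition $\pa_\nu\Phi+\tau\Phi=\xi$. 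Elliptic regularity on the $C^2$ domain then gives $\Phi(t)\in W^{2,p}_{loc}$ and, crucially, a trace $\pa_\nu\Phi(t)\in L^s(\pa\Omega)$ for a suitable $s$; indeed $\pa_\nu\Phi=\xi-\tau\Phi$, and since $\tau\in L^\infty$ and $\Phi\in W^{1,2}(\Omega)$ embeds its trace into $L^4(\pa\Omega)$ in dimension three, one gets $\pa_\nu\Phi\in L^{\min(q,4)}(\pa\Omega)$ with $\min(q,4)>2$. This is exactly the extra room needed.

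Next I would redo the energy computation keeping the boundary term. Integrating the dissipation identity and expanding the square as in~(\ref{eq:expl:diss})--(\ref{eq:NPP:mot:3d}), the only non-manifestly-controlled term is $\int_{\Gamma_T}z_i c_i\,\pa_\nu\Phi$. I would estimate
\[
\Big|\int_{\Gamma_T} z_i c_i\,\pa_\nu\Phi\Big|
\le \sum_i |z_i|\int_0^T \|c_i(t)\|_{L^{s'}(\pa\Omega)}\,\|\pa_\nu\Phi(t)\|_{L^s(\pa\Omega)}\,dt,
\]
and then absorb $\|c_i\|_{L^{s'}(\pa\Omega)}$ into the dissipation via the trace-interpolation inequality: writing $u_i=\sqrt{c_i}$, one has $\nabla u_i\in L^2(Q_T)$ from the bound on $\int_{Q_T} d_i|\nabla c_i|^2/c_i$, hence $u_i\in L^2(0,T;W^{1,2}(\Omega))\cap L^\infty(0,T;L^2(\Omega))$ (the latter from mass conservation, using boundedness of $f_i$), which by Sobolev trace embedding and interpolation controls $\|u_i\|_{L^a(\Sigma_T)}$ for $a$ up to $8/3$ in dimension three; squaring, $\|c_i\|_{L^{a/2}(\Sigma_T)}$ is controlled with $a/2$ up to $4/3$. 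Choosing $s'=a/2<4/3$, which forces $s>4$, does not quite work, so the correct trade-off is to use Young's inequality with the dissipation: bound $\|c_i\|_{L^{s'}(\pa\Omega)}\le \eta\big(\|\nabla\sqrt{c_i}\|_{L^2(\Omega)}^2+\|c_i\|_{L^1(\Omega)}\big)+C_\eta(\cdot)$ for small $\eta$, valid precisely when $s'$ is below the trace-interpolation threshold, i.e.\ when the conjugate exponent $s$ satisfies $s>2$ --- which is guaranteed by $\pa_\nu\Phi\in L^{\min(q,4)}$, $\min(q,4)>2$. The small-$\eta$ term is absorbed into the left-hand side dissipation, and what remains is a Gronwall-type inequality for the modified free energy, giving a bound on $\int_{Q_T} |\nabla c_i|^2/c_i$, on $\int_{Q_T}c_i|\nabla\Phi|^2$, and on $\int_{Q_T}|\nabla c_i||\nabla\Phi|$ that is now \emph{global in space}, not just local.

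From this global estimate the improved regularity follows by the same interpolation used in Theorem~\ref{th1}: $\nabla c_i = 2\sqrt{c_i}\,\nabla\sqrt{c_i}$, and $\|\nabla c_i\|_{L^{3/2}(Q_T)}\le 2\|\sqrt{c_i}\|_{L^6(Q_T)}\|\nabla\sqrt{c_i}\|_{L^2(Q_T)}$, where $\|\sqrt{c_i}\|_{L^6(Q_T)}$ is controlled in dimension three by $\|\sqrt{c_i}\|_{L^\infty(0,T;L^2)}$ and $\|\nabla\sqrt{c_i}\|_{L^2(Q_T)}$ via the standard parabolic Gagliardo--Nirenberg embedding $L^\infty(L^2)\cap L^2(W^{1,2})\hookrightarrow L^{10/3}$ in space-time (for $N=3$ one has $10/3\ge 2$, and combined with $L^\infty(L^2)$ one reaches $L^6(Q_T)$ for $\sqrt{c_i}$, i.e.\ $L^3$ for $c_i$). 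Hence $c_i\in L^1_{loc}([0,\infty);W^{1,3/2}(\Omega))$. The continuity $c_i\in C([0,\infty);L^1(\Omega))$ is inherited from the approximate problem and the $L^1$-contraction/equi-integrability arguments already present, and $\Phi\in C([0,\infty);L^r(\Omega))$ for $r<6$ follows from $\Phi\in L^\infty_{loc}(W^{1,2})$ together with time-continuity of $t\mapsto\sum_i z_ic_i(t)$ in a negative Sobolev norm (from $c_i\in C(L^1)$) and continuous dependence of the solution of the elliptic Robin problem on its right-hand side, using the Sobolev embedding $W^{1,2}(\Omega)\hookrightarrow L^6(\Omega)$ in dimension three and interpolation with $L^1$.

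The main obstacle is the boundary-term absorption: one must verify that the trace interpolation inequality for $\sqrt{c_i}$, combined with the integrability $\pa_\nu\Phi\in L^{\min(q,4)}(\pa\Omega)$ with exponent strictly above $2$, leaves a genuine gap allowing a small constant $\eta$ in front of the dissipative term after applying Young's inequality with a time integration --- and that this works uniformly along the approximating sequence, so that the passage to the limit (which, as in Theorem~\ref{th1}, is the delicate point due to only-weak convergence of the flux terms) goes through with the global, rather than local, bounds in hand. The restriction to $N=3$ enters precisely here, through the numerology of the Sobolev trace embedding $W^{1,2}(\Omega)\hookrightarrow L^4(\pa\Omega)$ and the space-time embedding $L^\infty(L^2)\cap L^2(W^{1,2})\hookrightarrow L^{10/3}$, which are the sharp ingredients making the exponents close.
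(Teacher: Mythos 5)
Your overall strategy is the paper's: keep the boundary term in the expanded dissipation, write $\pa_\nu\Phi=\xi-\tau\Phi$, use $\xi\in L^q(\pa\Omega)$ with $q>2$ together with the trace embedding $W^{1,2}(\Omega)\to L^r(\pa\Omega)$ (continuous at $r=4$, compact for $r<4$ when $N=3$) in its $\epsilon$-form (\ref{W12embedd}) applied to $\sqrt{c_i}$, absorb the small multiple of $\|\nabla\sqrt{c_i}\|^2_{L^2}$ into the dissipation, and thereby upgrade the local estimate of Lemma~\ref{lem:entr:2}~$(i)$ to the global one of Lemma~\ref{lem:entr:2}~$(ii)$; the $\Delta\Phi\in L^2(Q_T)$ bound and the continuity of $\Phi$ in $L^r(\Omega)$, $r<6$, then come out exactly as in the paper (the paper estimates the $c_i\xi$ and $c_i\tau\Phi$ contributions separately in (\ref{eq:diss:est:2})--(\ref{eq:diss:est:4}), but lumping them into $\pa_\nu\Phi\in L^{\min(q,4)}(\pa\Omega)$ is equivalent). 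This part of your proposal is sound, provided (as you note) the computation is carried out on the approximate quasilinear system, where the extra $\eta$-terms have a sign and cause no harm.

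There is, however, a concrete error in your derivation of the $W^{1,\frac32}$-regularity. The embedding $L^\infty(0,T;L^2)\cap L^2(0,T;W^{1,2})\hookrightarrow L^6(Q_T)$ is false for $N=3$: the parabolic interpolation exponent is $2(N+2)/N=10/3$, so $\sqrt{c_i}\in L^{10/3}(Q_T)$ only, and combining this with $\nabla\sqrt{c_i}\in L^2(Q_T)$ gives $\nabla c_i\in L^{5/4}(Q_T)$, not the space--time $L^{3/2}(Q_T)$ you claim. What is true, and what the theorem actually asserts, is the mixed-norm bound: for a.e.\ $t$, $\|\nabla c_i(t)\|_{L^{3/2}(\Omega)}\le 2\|\sqrt{c_i}(t)\|_{L^6(\Omega)}\|\nabla\sqrt{c_i}(t)\|_{L^2(\Omega)}$, and since $W^{1,2}(\Omega)\hookrightarrow L^6(\Omega)$ (spatial Sobolev, no parabolic interpolation needed) one has $\sqrt{c_i}\in L^2(0,T;L^6(\Omega))$, whence $\nabla c_i\in L^1(0,T;L^{3/2}(\Omega))$ by Cauchy--Schwarz in time. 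You should also not gloss over the passage to the limit here: these are bounds on $\nabla c_i^n$, which converges to $\nabla c_i$ only weakly in $L^1(Q_T)$, and $L^1(0,T;L^{3/2}(\Omega))$-boundedness does not transfer directly under weak $L^1$-convergence; the paper passes a Mazur (barycentric) sequence converging strongly in $L^1$ and a.e., which keeps the same mixed-norm bound, and concludes by Fatou. With the exponent corrected and this limit argument added, your proof coincides with the paper's.
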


Of course, all those formal computations sketched above need to be carried out on approximate solutions having sufficient regularity. But detecting an appropriate approximate version of~(\ref{NPP1})-(\ref{NPP4}) is not straightforward. On the one hand, the resulting problem should possess a global solution and on the other hand, we shall not disturb the energy structure too much, because we want to use the just motivated computations. One possibility is to regularize the total charge density $\sum_iz_ic_i$ in the Poisson equation~(\ref{NPP3}) by resolvents of the Robin-Laplacian subject to a small parameter $\ep>0$. This is done in~\cite{fischer13,rolland12} (for the case $f\equiv0$); see also~\cite{FS13} for a corresponding approach to NSNPP. A Leray-Schauder fixed-point argument then provides global weak solutions for the approximate system and the formal energy relations are inherited by the resulting approximate solutions. More precisely, there is a modified functional $\tilde V$ consisting of two summands with the same growth properties as $V_0$. The first one has the same form as $V_0$ and the second one constitutes a ``small'' perturbation term. However, despite of the similarity of the energy structure, the resulting a priori estimates turn out to be quite technical.\saut
In this article, we pursue a different strategy which is inspired by~\cite{GG96},\cite{GS}. The idea is to consider a {\em quasi-linear} approximate version of the semi-linear problem~(\ref{NPP1})-(\ref{NPP4}). Let us explain this approach in more detail. For $\eta>0$ and $p\in(1,\infty)$, set $h(r)=r+\eta r^p$, $r\ge0$. Following \cite{GG96}, \cite{GS}, it can be shown that the system
\begin{align}
&\partial_tc_i +\textrm{div}(-d_i\nabla h(c_i) - d_iz_i c_i\nabla \Phi)=f_i(c),\qquad\ t>0,\ x\in\Omega,\label{h}\\
&\pa_\nu h(c_i)+z_ic_i\pa_\nu\Phi=0,\quad t>0,\ x\in\pa\Omega,\qquad c_i(0)=c_i^0,\quad x\in\Omega,\quad i=1,\ldots,P,\label{hb}
\end{align}
supplemented with (\ref{NPP3})-(\ref{NPP4}), provides a very similar energy structure and moreover,leads to a very regular $\nabla\Phi$. To be more specific, $V_0$ has to be modified in the following way:
\[V(t)=\sum_{i=1}^N\int_\Omega \psi(c_i)+\frac12\bigg(\int_\Omega|\nabla\Phi|^2+\int_{\pa\Omega}\tau|\Phi|^2\bigg),\]
where $\psi(r)=r\log r-r +1+\frac\eta{p-1} r^p\ge0$, $r\ge0$. It can still be shown that $V$ grows at most exponentially. The striking point with this setup is that we obtain an $L^\infty(0,T;L^p(\Omega))$-estimate on $c$, where we can choose $p$ as large as we please, so that the solution $\Phi$ of (\ref{NPP3})-(\ref{NPP4}) is very regular. This observation essentially allows us to solve in a strong sense problem~(\ref{h})-(\ref{hb}), (\ref{NPP3})-(\ref{NPP4}) via a Leray-Schauder fixed-point argument. The technical reason why this strategy works is the relation $h'(r)=r\psi''(r)$. Compactness of approximate solutions is much easier to establish as compared to~\cite{fischer13,rolland12}. 

\begin{remark}\label{rem:data}
Let us comment on the hypotheses we require for Theorems~\ref{th1} and~\ref{th2}.
\begin{enumerate}[label=$(\roman{*})$]
\item The authors are aware of the limited physical relevance of having bounded production terms $f_i$. In a forthcoming paper, however, we will use the results of this article in order to construct global solutions for~(\ref{NPP1})-(\ref{NPP4}), where, apart from certain natural structural assumptions, the right-hand sides $f_i$ are merely bounded in $L^1(Q_T)$ which includes lots of significant applications.
\item Time-independence of $\xi$ is present for technical reasons and simplicity. The $L^2$-regularity is used in order to estimate $\Phi(0)$ in $W^{1,2}(\Omega)$, since $V(0)$ plays an important role when estimating~(\ref{eq:expl:diss}). The fact that we need slightly more regularity on $\xi$ is related to the compactness of certain Sobolev embeddings and is commented in Remark~\ref{rem:3d}.
\item\label{rem:gen} It is easy to see that, in the situation of Theorem~\ref{th1}, it is actually possible to derive a corresponding result for the case when the boundary condition~(\ref{NPP4}) is replaced by
\[\Phi=\xi_D\quad\on\Gamma_D,\qquad\pa_\nu\Phi+\tau\Phi=\xi_R\quad\on\Gamma_R,\]
where $\Gamma_D\cup\Gamma_R=\pa\Omega$ and $\Gamma_D,\Gamma_R$ are disjoint, open and closed in $\pa\Omega$, and where $\xi_D,\xi_R\in L^2(\pa\Omega)$. However, it seems difficult to have such a generalization for Theorem~\ref{th2}, see Remark~\ref{rem:gen:3d}.
\end{enumerate}
\end{remark}

\begin{remark}
It is easy to see that in case of $f\equiv0$, we obtain $c\in L^\infty(0,+\infty;L^1(\Omega))$ and $\Phi\in L^\infty(0,+\infty;W^{1,2}(\Omega))$ in the statements of Theorems~\ref{th1} and~\ref{th2}, cf.\ Lemma~\ref{lem:entropy}.
\end{remark}


\section{An approximate system.}\label{S3}
The aim of this section is the existence of weak solutions on arbitrary large time intervals for an approximate version of (\ref{NPP1})-(\ref{NPP4}) as well as energy estimates. As already mentioned, the idea is taken from the nonlinear results in \cite{GG96},\cite{GS}. Since we are in a different setting and, moreover, to provide a self-contained presentation, we give a proof below.
It is based on the results in~\cite{LSU}, energy estimates, and Leray-Schauder's fixed-point theorem. The crucial point with our choice of approximating~(\ref{NPP1})-(\ref{NPP4}) is in particular the a priori estimate for $c$ in $L^\infty(0,T;L^p(\Omega))$ and the fact that the approximate system preserves the energy structure discussed above. \\

Let throughout this section $T_0>0$ and $\eta\in (0,1)$ be fixed and set $h(r):=r+\eta r^p$ for some $p\in [1,\infty)$. Using this notation, we state the following approximate version of system~(\ref{NPP1})-(\ref{NPP4}):
\begin{align}
\left.
\begin{array}{rcll}
 \partial_tc_i +\textrm{div}(-d_i\nabla h(c_i) - d_i z_i c_i\nabla \Phi)	&=&f_i(c) &\on Q_{T_0}    \\
\partial_\nu h(c_i)+z_ic_i\partial_\nu \Phi				&=&0 &\on \Sigma_{T_0}\\
c_i(0)									&=&c_i^{0} &\on \Omega
\end{array}
\right\},&\ i=1,\ldots,P, \label{approx:ci}\\
\left.
\begin{array}{rcll}
-\Delta \Phi								&=&\sum_{i=1}^Pz_ic_i			&\on Q_{T_0} 	\\
\partial_\nu \Phi+\tau \Phi 						&=&\xi			&\on\Sigma_{T_0}
\end{array}
\right\},&\label{approx:phi}
\end{align}
where we assume the following stronger conditions on the data as compared to~(\ref{hyp:first})-(\ref{hyp:ic}):
\begin{equation}\label{ass:add:1}
\left.
\begin{array}{l}
d_i\in C^2([0,+\infty)\times\overline\Omega;(0,\infty)) \;{\rm and \;for\;some} \;\underline d(T), \, \overline d(T)>0,\\
0<\underline d(T)\leq d_i(t,x)\leq\overline d(T)<\infty\quad\mbox{for all}\ (t,x)\in(0,T)\times\Omega,\\
f_i\in C^2([0,+\infty)\times\overline\Omega\times\R^P) \;{\rm with}\\
\hspace{0.2cm}(i)\; |f_i(t,x,y)|\leq C,  \forall (t,x,y)\in [0,\infty)\times\overline\Omega\times\R^P,\\
\hspace{0.2cm}(ii)\; f_i(t,x,y)\ge 0 \;{\rm if}\; y_i=0, \;{\rm i.e.}\; f_i\;{\mbox{\rm  is quasi-positive}},\\
\tau\in C^{1}(\partial\Omega)^+\; {\rm with}\; \tau\not\equiv 0,\\
\xi\in C^2(\pa\Omega),\\
c^{0}\in C^2(\Omega)^+.
\end{array}
\right\}
\end{equation}

\begin{remark}
From the uniform bound on $f_i$, we directly deduce $c\in L^\infty(0,T;L^1(\Omega)^+)$ for any $T>0$ by~(\ref{approx:ci}) using the no-flux boundary conditions:
\[\Dt\int_\Omega c_i=\int_\Omega f_i(c)\leq C.\]
We will frequently refer to this boundedness as \textit{mass control}.
\end{remark}

The solution of~(\ref{approx:ci})-(\ref{approx:phi}) will be based on a Leray-Schauder fixed-point argument. We gather some a priori estimates first. Formally the functional $V$ defined by
$$V_0(t)=\sum_{i=1}^P \int_\Omega c_i\log c_i-c_i+1 +\frac{1}{2} \bigg(\int_{\Omega} |\nabla \Phi|^2 +\int_{\partial\Omega} \tau\Phi^2\bigg),$$
can be shown to grow at most exponentially in time if $(c,\Phi)$ is a solution to the original problem~(\ref{NPP1})-(\ref{NPP4}) (cf.\ \cite{BFS12,GG96}). In the subsequent lemma, we show that this perturbation procedure does preserve this energetic structure. It is useful to define
\begin{equation}\label{eq:def:psi}
\psi(r)=r\log r-r+1+\frac{\eta}{p-1}r^p, \qquad r\ge0.
\end{equation}
Note that $\psi\geq 0$. Note also the important relation:
$h'(r)=r\psi''(r)$.

\begin{lemma}\label{lem:entropy}
Let~(\ref{ass:add:1}) hold, let $\si\in(0,1]$ and suppose that $c$ is a classical solution to~(\ref{approx:ci}) where $\Phi$ is a classical solution to
\[\begin{array}{rcll}
-\Delta \Phi								&=&\sigma\sum_{i=1}^Pz_ic_i			&\on Q_{T_0}, 	\\
\partial_\nu \Phi+\tau \Phi 						&=&\sigma\xi			&\on \Sigma_{T_0}.
\end{array}\]
Set $\widehat\Phi:=\si^{-1/2}\Phi$ and
\begin{align}\label{lem:entropy:lyapunov}
& V(t)=\sum_{i=1}^P \int_\Omega \psi(c)+\frac{1}{2} \bigg(\int_{\Omega} |\nabla \widehat\Phi|^2 + \int_{\partial\Omega} \tau\widehat\Phi^2\bigg),
\end{align}
where $\psi$ is given by~(\ref{eq:def:psi}). Then there exists a constant $C>0$ which is independent of $\eta>0$ and $\si\in(0,1]$ such that
\begin{enumerate}[label=$(\roman{*})$]
\item\begin{equation}\label{entropy}
\Dt V(t)\leq -\int_\Omega \sum_{i=1}^P\frac{1}{d_i c_i} |d_i \nabla h(c_i)+ d_i z_i c_i\nabla \Phi|^2+C\big(1+V(t)\big),
\end{equation}
\item \begin{equation}\label{entropy:2}
V(t)\leq C \quad \;{\rm for\;all}\;t\in[0,T_0],
\end{equation}
\item\begin{equation}\label{lem:entr:2:1bis}
 \sum_{i=1}^P\int_{Q_{T_0}}   \frac{1}{d_i c_i}|d_i\nabla h(c_i)+d_i z_ic_i\nabla \Phi|^2\leq C.
\end{equation}
\end{enumerate}
\end{lemma}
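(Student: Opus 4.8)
The plan is to compute $\frac{d}{dt}V(t)$ directly along the classical solution, using the structural identity $h'(r) = r\psi''(r)$ to organize the dissipation term, and then absorb all remaining terms into $C(1+V(t))$. First I would differentiate the chemical part: $\frac{d}{dt}\int_\Omega \psi(c_i) = \int_\Omega \psi'(c_i)\partial_t c_i$. Substituting the equation $\partial_t c_i = \operatorname{div}(d_i\nabla h(c_i) + d_i z_i c_i\nabla\Phi) + f_i(c)$ and integrating by parts, the no-flux boundary condition $\partial_\nu h(c_i) + z_i c_i\partial_\nu\Phi = 0$ kills the boundary term, leaving $-\int_\Omega \psi''(c_i)\nabla c_i\cdot(d_i\nabla h(c_i) + d_i z_i c_i\nabla\Phi) + \int_\Omega \psi'(c_i)f_i(c)$. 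Now the key trick: $\psi''(c_i)\nabla c_i = \frac{1}{c_i}h'(c_i)\nabla c_i = \frac{1}{c_i}\nabla h(c_i)$, so the first integral becomes $-\int_\Omega \frac{1}{c_i}\nabla h(c_i)\cdot(d_i\nabla h(c_i) + d_i z_i c_i\nabla\Phi)$. To complete the square with weight $\frac{1}{d_i c_i}$, I would write this as $-\int_\Omega \frac{1}{d_i c_i}|d_i\nabla h(c_i) + d_i z_i c_i\nabla\Phi|^2 + \int_\Omega z_i\nabla\Phi\cdot(d_i\nabla h(c_i) + d_i z_i c_i\nabla\Phi)$, i.e. the dissipation term appears exactly as in the statement, plus a remainder $R_i := \int_\Omega z_i(d_i\nabla h(c_i)\cdot\nabla\Phi + d_i z_i c_i|\nabla\Phi|^2)$.

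Next I would handle $\sum_i R_i$ together with the electrical part of $V$. Differentiating $\frac12\int_\Omega|\nabla\widehat\Phi|^2 + \frac12\int_{\partial\Omega}\tau\widehat\Phi^2 = \frac{1}{2\sigma}(\int_\Omega|\nabla\Phi|^2 + \int_{\partial\Omega}\tau\Phi^2)$ in time, and using that $\Phi$ solves the linear elliptic problem with right-hand side $\sigma\sum_i z_i c_i$ (which is time-independent in its coefficients, $\tau$ and $\xi$ being fixed), a standard testing argument gives $\frac{d}{dt}\big(\tfrac{1}{2\sigma}\int_\Omega|\nabla\Phi|^2 + \tfrac{1}{2\sigma}\int_{\partial\Omega}\tau\Phi^2\big) = \int_\Omega (\sum_i z_i\partial_t c_i)\Phi + \int_{\partial\Omega}\xi\,\partial_t\Phi$ — more carefully, one differentiates the weak formulation $\int_\Omega\nabla\Phi\cdot\nabla\varphi + \int_{\partial\Omega}\tau\Phi\varphi = \sigma\int_\Omega(\sum z_i c_i)\varphi + \sigma\int_{\partial\Omega}\xi\varphi$ with $\varphi = \partial_t\Phi$ and recognizes the time derivative of the quadratic form. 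After substituting $\partial_t c_i$ from the PDE again and integrating by parts (the boundary terms now recombine using the Robin condition and the no-flux condition), the electrical contribution produces precisely $-\sum_i R_i$ plus a term $\int_\Omega(\sum_i z_i f_i(c))\Phi$. Thus the $R_i$'s cancel and I am left with $\frac{d}{dt}V(t) = -\sum_i\int_\Omega\frac{1}{d_i c_i}|d_i\nabla h(c_i)+d_i z_i c_i\nabla\Phi|^2 + \sum_i\int_\Omega\psi'(c_i)f_i(c) + \int_\Omega(\sum_i z_i f_i(c))\Phi$.

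The remaining task for part (i) is to bound $E(t) := \sum_i\int_\Omega\psi'(c_i)f_i(c) + \int_\Omega(\sum_i z_i f_i(c))\Phi$ by $C(1+V(t))$ with $C$ independent of $\eta$ and $\sigma$. Since $|f_i|\le C$, the term $\int_\Omega(\sum z_i f_i)\Phi$ is controlled by $C\|\Phi\|_{L^1(\Omega)} \le C\|\widehat\Phi\|_{L^1} \le C(1 + \|\nabla\widehat\Phi\|_{L^2}^2 + \|\widehat\Phi\|_{L^2}^2)$, and since $\tau\not\equiv 0$ a Poincaré–trace inequality bounds $\|\widehat\Phi\|_{L^2}^2 \le C(\|\nabla\widehat\Phi\|_{L^2}^2 + \int_{\partial\Omega}\tau\widehat\Phi^2) \le C\,V(t)$ up to constants — this uses $\sigma \le 1$ only through $\widehat\Phi = \sigma^{-1/2}\Phi$ and the fact that the inequality constant depends only on $\Omega, \tau$. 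For $\sum_i\int\psi'(c_i)f_i(c)$, the quasi-positivity $f_i \ge 0$ when $c_i = 0$ together with $\psi'(r) = \log r + \frac{\eta p}{p-1}r^{p-1}$ requires care near $c_i = 0$ where $\log c_i \to -\infty$; the standard fix is that $\psi'(c_i)f_i(c)$ is bounded above because for small $c_i$ one has $\psi'(c_i)\le 0$ while $f_i \ge 0$ (using the quasi-positivity and continuity, $f_i(c) \ge -C c_i$ near $c_i=0$, so $\psi'(c_i)f_i(c) \le |\log c_i|\cdot C c_i + (\text{bounded}) \le C$), and for $c_i$ bounded away from $0$ it is dominated by $C\psi(c_i) + C$ since $\psi(r)/\psi'(r)$ and the polynomial parts match with $\eta$-independent constants. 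Summing over $i$ gives $E(t) \le C(1+V(t))$, establishing (i). Then (ii) follows from (i) by Gronwall: $V(t) \le (V(0) + Ct)e^{Ct}$, and $V(0)$ is bounded uniformly in $\eta \in (0,1)$ and $\sigma \in (0,1]$ because $c^0 \in C^2(\overline\Omega)^+$ is bounded (so $\int\psi(c^0_i) \le \int(c^0_i\log c^0_i - c^0_i + 1) + \frac{\eta}{p-1}\|c^0_i\|_\infty^{p-1}\int c^0_i$, and $\frac{\eta}{p-1} \le \frac{1}{p-1}$ is $\eta$-independent) and $\|\nabla\widehat\Phi(0)\|_{L^2}^2 + \int_{\partial\Omega}\tau\widehat\Phi(0)^2 = \sigma^{-1}(\ldots) \le C\sigma^{-1}\cdot\sigma(\ldots) = C$ via the energy estimate for the linear elliptic problem with right-hand side $\sigma\sum z_i c^0_i$ and datum $\sigma\xi$, using $\xi \in L^2(\partial\Omega)$. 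Finally (iii) is immediate: integrate (i) from $0$ to $T_0$, use $V \ge 0$ and (ii) to get $\int_0^{T_0}\sum_i\int_\Omega \frac{1}{d_i c_i}|d_i\nabla h(c_i)+d_iz_ic_i\nabla\Phi|^2 \le V(0) + C\int_0^{T_0}(1+V(t))\,dt \le C$.

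\medskip

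\emph{The main obstacle.} The delicate point is the uniform-in-$\eta$ (and in $\sigma$) control of $\sum_i\int_\Omega\psi'(c_i)f_i(c)$: one must exploit quasi-positivity to tame the $\log c_i$ singularity as $c_i\to 0^+$ without letting the constant blow up as $\eta\to 0$ or $\sigma\to 0$, and simultaneously check that the polynomial term $\frac{\eta p}{p-1}c_i^{p-1}f_i(c)$ is absorbed into $C(1+V(t))$ with an $\eta$-independent constant (it is, since $V$ itself carries the $\frac{\eta}{p-1}c_i^p$ term and $|f_i| \le C$, so Young's inequality gives $\frac{\eta p}{p-1}c_i^{p-1}\cdot C \le \frac{\eta}{p-1}c_i^p + C_p\eta \le \psi(c_i) + C$). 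The bookkeeping of boundary terms when differentiating the electrical energy — making sure the Robin datum $\xi$ and the no-flux condition conspire to cancel the $R_i$'s exactly — is the other place where sign errors are easy; I would do that integration by parts slowly and symmetrically.
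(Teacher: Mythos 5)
Your proof is correct and takes essentially the same route as the paper: the identity $h'(r)=r\psi''(r)$ to produce the dissipation term after completing the square, cancellation of the cross terms $\sum_i\int_\Omega z_i J_i\cdot\nabla\Phi$ against the time derivative of the electric energy (via the time-differentiated Poisson problem, the no-flux boundary condition, and the time-independence of $\tau,\xi$), quasi-positivity plus smoothness of $f_i$ to tame the $\log c_i$ singularity, and Gronwall together with the elliptic energy estimate at $t=0$ to get (ii) and then (iii). The only blemish is the intermediate display $\frac{d}{dt}(\cdot)=\int_\Omega(\sum_i z_i\partial_t c_i)\Phi+\int_{\partial\Omega}\xi\,\partial_t\Phi$, which mixes the two equivalent testing procedures; the correct identity, and the one your subsequent exact cancellation actually uses, is $\frac{d}{dt}\,\frac{1}{2\sigma}\big(\int_\Omega|\nabla\Phi|^2+\int_{\partial\Omega}\tau\Phi^2\big)=\int_\Omega\big(\sum_i z_i\partial_t c_i\big)\Phi$, with no boundary contribution since $\partial_t\xi=0$.
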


\begin{remark}\label{w12}
\rm Note that, since $\tau\not \equiv 0$, and since $\Omega$ is connected
$$\widehat{\Phi}\to \frac{1}{2} \bigg(\int_{\Omega} |\nabla \widehat\Phi|^2 + \int_{\partial\Omega} \tau\widehat\Phi^2\bigg),$$
defines a norm which equivalent to the usual norm on $W^{1,2}(\Omega)$. Since $\psi\geq 0$, it follows that a uniform bound on $V(t), t\in [0,T_0]$ provides a bound on $\widehat{\Phi}$ in $L^\infty\left(0,T_0; W^{1,2}(\Omega)\right)$. 
\end{remark}

\begin{proof}[Proof of Lemma~\ref{lem:entropy}]
We only provide a formal proof (since this will be used for regular enough solutions). Setting $J_i=-d_i(\nabla h(c_i)+z_i c_i\nabla \Phi)$, we have
\begin{align}
 \Dt\sum_{i=1}^P \int_\Omega \psi(c_i)	&=\sum_{i=1}^P \int_\Omega \psi'(c_i)\pa_tc_i=-\sum_{i=1}^P \int_\Omega \psi'(c_i)\divv J_i+\sum_{i=1}^P\int_\Omega\psi'(c_i)f_i(c)\nonumber\\
	&=\sum_{i=1}^P\int_\Omega\psi''(c_i)\nabla c_i\cdot J_i+\sum_{i=1}^P\int_\Omega f_i(c)
\big( \log c_i+\eta\frac{p}{p-1}c_i^{p-1} \big)\nonumber\\
	&\le\sum_{i=1}^P\int_\Omega\frac{h'(c_i)}{c_i}\nabla c_i\cdot J_i+C\Big(1+\sum_{i=1}^P\int_\Omega c_i\log c_i+\eta \, c_i^p\Big)\label{eq:entr:qp}\\
	&=\sum_{i=1}^P\int_\Omega\frac{\nabla h(c_i)+z_ic_i\nabla\Phi}{c_i}\cdot J_i-\int_\Omega\sum_{i=1}^Pz_i\nabla\Phi\cdot J_i+C\Big(1+\sum_{i=1}^P\int_\Omega\psi(c_i)\Big)\nonumber\\
	&\le-\sum_{i=1}^P \int_\Omega \frac{1}{d_i c_i}|J_i|^2 -\sum_{i=1}^P \int_\Omega z_iJ_i \cdot \nabla \Phi+C\Big(1+V(t)\Big).\label{eq:entr:der}
\end{align}
Note that in~(\ref{eq:entr:qp}), we made use of smoothness and quasi-positivity of $f_i$ in order to estimate $f_i(c)\log c_i\leq C$ for $c_i\leq1$, say. For the second term on the right-hand side of~(\ref{eq:entr:der}), we obtain by integration by parts
\begin{align*}
 -\sum_{i=1}^P 	\int_\Omega  z_iJ_i\cdot \nabla \Phi &=\sqrt\si\sum_{i=1}^P \int_\Omega z_i(\divv J_i)\widehat\Phi=-\sqrt\si\int_\Omega\left[ \partial_t\Big(\sum_{i=1}^P z_i c_i\Big)-\sum_{i=1}^Pz_if_i(c)\right] \widehat\Phi\\
& \leq \int_\Omega (\pa_t\Delta\widehat\Phi)\widehat\Phi+C\left[1+\int_\Omega\widehat{\Phi}^2\right],
\end{align*}
where we used the uniform bound on the $f_i$ and Young's inequality. Moreover we have
\begin{equation*}
\int_\Omega (\pa_t\Delta\widehat\Phi)\widehat\Phi=-\int_\Omega\nabla(\pa_t\widehat\Phi)\cdot\nabla\widehat\Phi-\int_{\pa\Omega}\tau(\pa_t\widehat\Phi)\widehat\Phi=-\frac12\Dt\left[\int_\Omega|\nabla\widehat\Phi|^2+\int_{\pa\Omega}\tau\widehat\Phi^2\right].
\end{equation*}
Finally, going back to (\ref{eq:entr:der}) and using the computations we just made, we deduce~(\ref{entropy}) (note also that $\int_\Omega\widehat{\Phi}^2\leq CV(t)$ according to Remark \ref{w12}. Gronwall's inequality then implies that $V$ grows at most exponentially in time. So the facts that $\tau\in L^\infty(\pa\Omega)^+$ and $c^0\in L^p(\Omega)^+$ and the estimate
\[\int_{\Omega} |\nabla \widehat\Phi(0)|^2 +\int_{\partial\Omega} \tau\widehat\Phi(0)^2\leq C\|\widehat\Phi(0)\|_{W^{1,2}(\Omega)}^2\leq C\sqrt\si(\|\sum_{i=1}^Pz_ic_i^0\|_{L^2(\Omega)}^2+\|\xi\|_{L^2(\pa\Omega)}^2)\]
imply~(\ref{entropy:2}). Relation~(\ref{lem:entr:2:1bis}) is then a consequence of integrating~(\ref{entropy}) from $0$ to $T_0$.
\end{proof}

Let us now proceed with the construction of a global solution to~(\ref{approx:ci})-(\ref{approx:phi}) through a fixed-point method.\saut
\nd\textbf{Definition of the fixed-point map.} Let~(\ref{ass:add:1}) be satisfied. Set $X:=L^\infty(0,T_0;W^{1,\infty}(\Omega))$ and let $\Phi\in X$. The idea is to define $c$ as the solution to~(\ref{approx:ci}) and then $\TT\Phi:=\widehat\Phi\in X$ as the solution to~(\ref{approx:phi}) with data $c$. A fixed point of this map $\TT$ is then a solution to~(\ref{approx:ci})-(\ref{approx:phi}) on $Q_{T_0}$. For regularity reasons, we do not define $\TT$ directly this way:  we first rely on some approximation and truncation arguments. \saut
In this respect, we first consider a smooth approximation $(\Phi^k)_{k\in\N}$ of $\Phi$ such that $\nabla\Phi^k\to\nabla\Phi$ pointwise and $\|\Phi^k\|_{L^\infty(0,T_0;W^{1,\infty}(\Omega))}\leq\|\Phi\|_{L^\infty(0,T_0;W^{1,\infty}(\Omega))}$. Next, we replace $h(r)=r+\eta r^p$ by $h_M(r):=r+\eta T_M(r)$ where $T_M$ is a regular bounded nondecreasing approximation of $\inf\{r^p,M\}$ (thus $h'_M(r)\geq 1$). Then the problem
\begin{equation*}
\left.
\begin{array}{rcll}
 \partial_tc_i +\textrm{div}(-d_i\nabla h^M(c_i) - d_i z_i c_i\nabla \Phi^k)	&=&f_i(c) &\on Q_{T_0},    \\
\partial_\nu h^M(c_i)+z_ic_i\partial_\nu \Phi^k			&=&0 &\on \Sigma_{T_0},\\
c_i(0)									&=&c_i^{0} &\on \Omega,
\end{array}\right\},\ i=1,\ldots,P,
\end{equation*}
has a unique nonnegative solution $c^{M,k}=(c_1^{M,k},\ldots,c_P^{M,k})\in H^{2+\be,1+\be/2}(\overline{Q_{T_0}})$ for some $\be>0$ by~\cite[Theorem 7.4, page 491]{LSU}. Moreover, the norms of $c^{M,k}$ in $L^\infty(Q_{T_0})$ and in $L^2(0,T_0;W^{1,2}(\Omega))$ depend only on the initial data, the bounds $\underline d(T_0),\overline d(T_0)$, and $\|\nabla\Phi^k\|_{L^\infty(Q_{T_0})}$. This can be seen by the usual technique of multiplication of the system by $(c_i^{M,k})^{q-1}$, $q=2^m$, where $m=1,2,\ldots$; see, e.g., \cite{BFS12,CL_multidim}.  But for completeness, we give an explicit proof in the Appendix (see Lemma \ref{supbound}) based on a classical technical lemma from \cite{LSU}.

Hence, choosing $M>0$ large enough, we obtain a solution $c^k=(c_1^k,\ldots,c_P^k)$ for the problem
\begin{equation}\label{eq:LS:cc}
\left.
\begin{array}{rcll}
 \partial_tc_i +\textrm{div}(-d_i\nabla h(c_i) - d_i z_i c_i\nabla \Phi^k)	&=&f_i(c) &\on Q_{T_0},    \\
\partial_\nu h(c_i)+z_ic_i\partial_\nu \Phi^k				&=&0 &\on \Sigma_{T_0},\\
c_i(0)									&=&c_i^{0} &\on \Omega,
\end{array}\right\},\ i=1,\ldots,P.
\end{equation}
Since $c^k$ is a classical solution to~(\ref{eq:LS:cc}), it also satisfies the weak formulation
\begin{equation}\label{eq:fix:weak}
\int_{Q_{T_0}}-c_i^k\psi_t+(d_i\nabla h(c_i^k)+d_iz_ic_i^k\nabla\Phi^k)\nabla\psi=\int_\Omega c_i^0\psi(0)+\int_{Q_{T_0}}f_i(c^k)\psi,
\end{equation}
for all $\psi\in C^\infty(\overline{Q_{T_0}})\mbox{ with }\psi(T)=0$ and for all $i=1,\ldots,P$.

The bounds on $c^k$ in $L^\infty(Q_{T_0})\cap L^2(0,T_0;W^{1,2}(\Omega))$  and on $\pa_tc^k$ in $L^2(0,T_0;W^{-1,2}(\Omega))$ (see Lemma \ref{H-1} in the Appendix), imply that $c^k$ is relatively compact in $L^2(Q_{T_0})$ by virtue of Aubin-Simon compactness; cf.\ \cite[Corollary 4]{simon86}. Thus we may assume that $c^k\to c$ strongly in $L^2(Q_{T_0})$ and $\nabla c^k\to \nabla c$ weakly in $L^2(Q_{T_0})$. The limit $k\to\infty$ in~(\ref{eq:fix:weak}) shows that $c$ is a weak solution to
\begin{equation}\label{eq:LS:c}
\left.
\begin{array}{rcll}
 \partial_tc_i +\textrm{div}(-d_i\nabla h(c_i) - d_i z_i c_i\nabla \Phi)	&=&f_i(c) &\on Q_{T_0},    \\
\partial_\nu h(c_i)+z_ic_i\partial_\nu \Phi			&=&0 &\on \Sigma_{T_0},\\
c_i(0)									&=&c_i^{0} &\on \Omega,
\end{array}\right\},\ i=1,\ldots,P,
\end{equation}
in the sense of~(\ref{eq:fix:weak}). Let us show that it is unique in the class $L^\infty(Q_{T_0})^+\cap L^2(0,T_0;W^{1,2}(\Omega))$ through the following {\em dual approach} lemma.

\begin{lemma}\label{lem:dual}
Let $T\in(0,\infty)$, $\Theta\in C_0^\infty(Q_T)$ and suppose $c,\widehat c\in L^\infty(Q_{T})^+\cap L^2(0,T;W^{1,2}(\Omega))$ are two solutions to~(\ref{eq:LS:c}) for a given $\Phi$. Then there is a solution to the problem
\begin{equation}\label{eq:dual}
\left.
\begin{array}{l}
-(\pa_t\Psi_i+A_i\divv(d_i\nabla\Psi_i)-d_iz_i\nabla\Phi\nabla\Psi_i+\sum_{j=1}^PB_{ji}\Psi_j)=\Theta_i\on Q_{T},\\
\pa_\nu\Psi_i(T)=0\on\Sigma_{T},\qquad\Psi_i(T)=0\on\Omega,\\
\Psi_i\in L^2(0,T;W^{2,2}(\Omega))\cap W^{1,2}(0,T;L^2(\Omega)),
\end{array}\right\},\ i=1,\ldots,P,
\end{equation}
where $A_i=\frac{h(c_i)-h(\widehat c_i)}{c_i-\widehat c_i}$, $B_{ij}=\int_0^1\pa_{y_j}f_i(sc+(1-s)\widehat c)ds$.
\end{lemma}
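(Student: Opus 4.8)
The plan is to produce $\Psi$ by regularization together with an a priori estimate that does not feel any lack of regularity of the leading coefficient $A_i$, exploiting that $A_i$ sits in the \emph{dual} equation, where a well chosen test function turns it into a term with a favourable sign. First I would record what is known about the coefficients. Since $c,\widehat c\in L^\infty(Q_T)^+$ and $h'(r)=1+\eta p\,r^{p-1}$, the mean value theorem gives $1\le A_i\le\overline A:=1+\eta p R^{p-1}$ with $R:=\max_i\big(\|c_i\|_{L^\infty(Q_T)}\vee\|\widehat c_i\|_{L^\infty(Q_T)}\big)$, so $A_i$ is uniformly elliptic but only bounded measurable; continuity of $\pa_{y_j}f_i$ on the compact set $[0,T]\times\overline\Omega\times[0,R]^P$ gives $B_{ij}\in L^\infty(Q_T)$; and in the situation where the lemma is applied one has $\Phi\in X=L^\infty(0,T_0;W^{1,\infty}(\Omega))$, hence $\nabla\Phi\in L^\infty(Q_T)$. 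After the change of variable $t\mapsto T-t$, (\ref{eq:dual}) becomes the forward linear parabolic system
\[\pa_t\Psi_i-A_i\divv(d_i\nabla\Psi_i)+d_iz_i\nabla\Phi\cdot\nabla\Psi_i-\sum_{j=1}^PB_{ji}\Psi_j=\Theta_i\on Q_T,\qquad \pa_\nu\Psi_i=0\on\Sigma_T,\qquad\Psi_i(0)=0,\]
whose leading coefficient $A_id_i$ lies in $[\underline d,\overline A\,\overline d]$.

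Next I would mollify $A_i$ in $(t,x)$ to get $A_i^n\in C^\infty$ with $1\le A_i^n\le\overline A$ and $A_i^n\to A_i$ a.e.\ in $Q_T$. Since $A_i^nd_i\in C^2$ and the remaining coefficients are bounded, classical parabolic $L^2$-theory (\cite{LSU}) yields a unique solution $\Psi^n$ of the regularized forward system in $L^2(0,T;W^{2,2}(\Omega))^P\cap W^{1,2}(0,T;L^2(\Omega))^P$ with $\pa_\nu\Psi_i^n=0$ on $\Sigma_T$ and $\Psi_i^n(0)=0$; alternatively one may run a Galerkin scheme built, for each $i$, from the eigenfunctions of $-\divv(d_i\nabla\cdot)$ with homogeneous Neumann condition, which makes $-\divv(d_i\nabla\Psi_i^n)$ an admissible test function already at the finite-dimensional level.

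The heart of the matter is a uniform a priori estimate. Multiplying the $i$-th regularized equation by $-\divv(d_i\nabla\Psi_i^n)$, integrating over $\Omega$, and integrating by parts \emph{only} in the time-derivative term (this is where $\pa_\nu\Psi_i^n=0$ enters), one obtains
\[\tfrac12\Dt\int_\Omega d_i|\nabla\Psi_i^n|^2+\int_\Omega A_i^n\,|\divv(d_i\nabla\Psi_i^n)|^2=\tfrac12\int_\Omega\pa_t d_i\,|\nabla\Psi_i^n|^2+\int_\Omega\big(d_iz_i\nabla\Phi\cdot\nabla\Psi_i^n-\sum_{j=1}^PB_{ji}\Psi_j^n-\Theta_i\big)\divv(d_i\nabla\Psi_i^n).\]
Since $A_i^n\ge1$ and $d_i\ge\underline d$, Young's inequality absorbs a fixed fraction of $\int|\divv(d_i\nabla\Psi_i^n)|^2$ into the left-hand side; adding the elementary estimate obtained by testing with $\Psi_i^n$ and summing over $i$ gives, for $E^n(t):=\sum_i\int_\Omega\big(d_i|\nabla\Psi_i^n|^2+|\Psi_i^n|^2\big)$, a differential inequality $\Dt E^n\le C\,E^n+C\|\Theta(t)\|_{L^2(\Omega)}^2$ together with a bound on $\sum_i\|\divv(d_i\nabla\Psi_i^n)\|_{L^2(Q_T)}^2$, where $C$ depends only on $\underline d,\overline d,\overline A,\|\nabla\Phi\|_{L^\infty},\|B\|_{L^\infty},T$ and \emph{not} on $n$. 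As $E^n(0)=0$, Gronwall's lemma bounds $\Psi^n$ uniformly in $L^\infty(0,T;W^{1,2}(\Omega))^P$; $W^{2,2}$-regularity for the Neumann problem on the $C^2$ domain $\Omega$ (with $d_i\in C^2$ bounded below) then bounds $\Psi^n$ uniformly in $L^2(0,T;W^{2,2}(\Omega))^P$, and reinserting into the equation bounds $\pa_t\Psi^n$ uniformly in $L^2(Q_T)^P$.

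It remains to pass to the limit: along a subsequence $\Psi^n\rightharpoonup\Psi$ in $L^2(0,T;W^{2,2}(\Omega))^P$, $\pa_t\Psi^n\rightharpoonup\pa_t\Psi$ in $L^2(Q_T)^P$, and, by Aubin-Simon compactness (\cite{simon86}), $\Psi^n\to\Psi$ strongly in $L^2(0,T;W^{1,2}(\Omega))^P$ and in $C([0,T];L^2(\Omega))^P$, so $\Psi_i(0)=0$ and $\pa_\nu\Psi_i=0$ on $\Sigma_T$ are preserved. All terms of the weak formulation pass to the limit by weak$\times$strong arguments (for the drift and coupling terms, weak $L^2$-convergence of $\nabla\Psi^n$, resp.\ $\Psi^n$, against the fixed $L^\infty$ coefficients is enough); the only delicate term is $A_i^n\divv(d_i\nabla\Psi_i^n)$, handled by writing, for $\varphi\in L^2(Q_T)$, $\int A_i^n\divv(d_i\nabla\Psi_i^n)\varphi=\int\divv(d_i\nabla\Psi_i^n)\,(A_i^n\varphi)$ and using $A_i^n\varphi\to A_i\varphi$ in $L^2(Q_T)$ (dominated convergence, $|A_i^n|\le\overline A$) against the weak convergence of $\divv(d_i\nabla\Psi_i^n)$. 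Undoing the time change yields a solution $\Psi$ of (\ref{eq:dual}) in the required class. The main obstacle is exactly the a priori estimate of the third paragraph: since $A_i$ is merely bounded measurable, no $W^{2,2}$ bound would be available for a generic equation with that leading coefficient; what saves the argument is that we solve the \emph{dual} problem and that the test function $-\divv(d_i\nabla\Psi_i)$ converts the offending term $A_i\divv(d_i\nabla\Psi_i)$ into the nonnegative quantity $\int_\Omega A_i|\divv(d_i\nabla\Psi_i)|^2\ge\underline d\int_\Omega|\divv(d_i\nabla\Psi_i)|^2$, producing estimates uniform in any regularization of $A_i$.
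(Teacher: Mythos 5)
Your proposal is correct and follows essentially the same route as the paper: regularize the merely bounded coefficients, solve the regularized linear parabolic system by classical theory, obtain uniform $L^2(0,T;W^{2,2}(\Omega))\cap W^{1,2}(0,T;L^2(\Omega))$ bounds by testing with $\Psi_i$ and a second-order quantity (the paper uses $\Delta\Psi_i$, you use $-\divv(d_i\nabla\Psi_i)$, which is the same mechanism of turning $A_i$ into a coefficient of a nonnegative square), and pass to the limit by weak--strong convergence; you simply spell out the details the paper delegates to Lemma~5.2 of \cite{LPR}. The only blemish is the final inequality $\int_\Omega A_i|\divv(d_i\nabla\Psi_i)|^2\ge\underline d\int_\Omega|\divv(d_i\nabla\Psi_i)|^2$, which should use the lower bound $A_i\ge1$ (or $A_i\ge a$) rather than $\underline d$; this is cosmetic and does not affect the argument.
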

\begin{remark} \rm The form of $B_{ij}$ in this lemma originates from the elementary relation
\[f_i(y)-f_i(\widehat y)=\int_0^1\Dt\big(f_i(sy+(1-s)\widehat y)ds=\sum_{j=1}^P\int_0^1\pa_{y_j}f_i(sy+(1-s)\widehat y)(y_j-\widehat y_j)ds.\]
\end{remark}

\begin{proof}[Proof of Lemma~\ref{lem:dual}]
Note that due to the monotonicity of $h$ and the boundedness of $c,\widehat c$, there are positive constants $a,b$ such that $a\le\frac{h(c_i)-h(\widehat c_i)}{c_i-\widehat c_i}\le b$; moreover $B_{ij}\in L^\infty(Q_T)$. The first step for the construction of a solution $\Psi$ consists in suitably regularizing $A_i$ and $B_{ij}$ so that classical results on linear parabolic systems yield classical solutions on $Q_T$ (see e.g.~\cite{denk,dhp}). Then a priori estimates in $W^{1,2}(0,T;L^2(\Omega))\cap L^2(0,T;W^{2,2}(\Omega))$ are obtained by multiplying~(\ref{eq:dual}) with $\Psi_i$ and $\Delta\Psi_i$, integrating over $Q_T$, and summing over $i$ respectively. This computation actually works very similarly to the proof of Lemma~5.2 in~\cite{LPR}, which is why we omit the details here.
\end{proof}

So let $c,\widehat c\in L^\infty(Q_{T})^+\cap L^2(0,T;W^{1,2}(\Omega))$ be two solutions to~(\ref{eq:LS:c}), let $\Theta=(\Theta_1,\ldots,\Theta_P)\in C_0^\infty(Q_{T_0})$ be arbitrary, and suppose that $\Psi=(\Psi_1,\ldots,\Psi_P)$ is a solution to~(\ref{eq:dual}). Then using integration by parts, we compute
\begin{align*}\sum_{i=1}^P&\int_{Q_{T_0}}(c_i-\widehat c_i)\Theta_i\\
	&=\sum_{i=1}^P\int_{Q_{T_0}}-(c_i-\widehat c_i)\pa_t\Psi_i+\big(d_i\nabla(h(c_i)-h(\widehat c_i))+d_iz_i(c_i-\widehat c_i)\nabla\Phi\big)\nabla\Psi_i-\sum_{j=1}^PB_{ij}\Psi_i\\
	&=\sum_{i=1}^P\int_{Q_{T_0}}-(c_i-\widehat c_i)\pa_t\Psi_i+\big(d_i\nabla(h(c_i)-h(\widehat c_i))+d_iz_i(c_i-\widehat c_i)\nabla\Phi\big)\nabla\Psi_i-(f_i(c)-f_i(\widehat c))\Psi_i=0,
\end{align*}
whence $c=\widehat c$.

From the fact that $c\in L^\infty(Q_{T_0})$, we can then finally define
\begin{equation}\label{def:fix:map}
\TT\Phi:=\widehat\Phi\in L^\infty(0,T_0;W^{1,\infty}(\Omega))
\end{equation}
as the solution to
\begin{equation}\label{eq:LS:phi}
\left.
\begin{array}{rcll}
-\Delta \widehat\Phi								&=&\sum_{i=1}^Pz_ic_i			&\on Q_{T_0},	\\
\partial_\nu \widehat\Phi+\tau \widehat\Phi 						&=&\xi			&\on \Sigma_{T_0},
\end{array}\right\}
\end{equation}
see e.g.~\cite{grisvard}.

\begin{lemma}\label{lem:prop}
Let the data of problem~(\ref{approx:ci})-(\ref{approx:phi}) satisfy (\ref{ass:add:1}) and let $p\in[2,\infty)$ with $p> N/2$. Then there exists $(c,\Phi)$ such that $ c\in L^\infty(Q_{T_0})\cap L^2(0,T_0;W^{1,2}(\Omega))$, $\partial_t c \in L^2(0,T_0;W^{-1,2}(\Omega))$, $\Phi\in L^\infty(0,T_0;W^{2,p}(\Omega))$ and $(c,\Phi)$ satisfies (\ref{approx:ci})-(\ref{approx:phi}) where \eqref{approx:ci} is satisfied in the sense that, for all $\psi\in C^\infty(\overline{Q_{T_0}})$ with $\psi(T_0)=0,$
\begin{equation}\label{eq:prop1:main}
 \int_{Q_{T_0}}-c_i\partial_t\psi +(d_i\nabla h(c_i) +d_iz_ic_i\nabla \Phi)\nabla \psi =\int_{\Omega} c_i^0 \psi(0)+\int_{Q_{T_0}}f_i(c)\psi
\end{equation}
and \eqref{approx:phi} is satisfied in a pointwise sense.
\end{lemma}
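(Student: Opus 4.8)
The plan is to establish Lemma~\ref{lem:prop} by a Leray--Schauder fixed-point argument for the map $\TT$ constructed above, i.e.\ $\TT\colon\Phi\mapsto\widehat\Phi$, where $c$ solves \eqref{eq:LS:c} (uniquely, as just shown) and $\widehat\Phi$ solves \eqref{eq:LS:phi}. First I would fix the functional setting: since $\TT\Phi=\widehat\Phi$ only depends on $\Phi$ through the weak solution $c\in L^\infty(Q_{T_0})^+\cap L^2(0,T_0;W^{1,2}(\Omega))$, and since elliptic regularity for the Robin problem \eqref{eq:LS:phi} with right-hand side $\sum_i z_ic_i\in L^\infty(Q_{T_0})$ gives $\widehat\Phi\in L^\infty(0,T_0;W^{2,p}(\Omega))$ for every finite $p$ (using $\partial\Omega\in C^2$, $\tau\in C^1$, $\xi\in C^2$), the map $\TT$ takes values in a space compactly embedded into $X=L^\infty(0,T_0;W^{1,\infty}(\Omega))$ for $p>N$ (Morrey). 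So I would take $X$ as the ambient Banach space, verify $\TT(X)$ lands in a bounded set of $L^\infty(0,T_0;W^{2,p}(\Omega))$, and argue compactness of $\TT$ from $W^{2,p}(\Omega)\inj\inj W^{1,\infty}(\Omega)$ together with an equicontinuity-in-time estimate coming from the time-regularity of $c$ (the $W^{-1,2}$ bound on $\pa_t c$ from Lemma~\ref{H-1} transfers, via the Poisson equation, to a Hölder-in-time bound on $\widehat\Phi$ in a weaker norm, which upgrades by interpolation).

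Next I would check continuity of $\TT$: if $\Phi_n\to\Phi$ in $X$, then the approximation/truncation construction and the uniform $L^\infty\cap L^2(W^{1,2})$ bounds on the associated $c_n$ (depending only on the data and $\|\nabla\Phi_n\|_{L^\infty}$, which is bounded) give, via Aubin--Simon, $c_n\to c$ strongly in $L^2(Q_{T_0})$ along a subsequence, and the uniqueness from Lemma~\ref{lem:dual} forces the whole sequence to converge to the $c$ associated with $\Phi$; then elliptic continuity of \eqref{eq:LS:phi} gives $\TT\Phi_n\to\TT\Phi$. The key a priori bound needed for Leray--Schauder is on the set of fixed points of $\sigma\TT$, $\sigma\in[0,1]$: a fixed point $\Phi=\sigma\TT\Phi$ means exactly that $(c,\Phi)$ solves \eqref{approx:ci} together with the $\sigma$-scaled Poisson problem of Lemma~\ref{lem:entropy}. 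That lemma (with $\widehat\Phi=\sigma^{-1/2}\Phi$) then yields the $\sigma$-independent bound $V(t)\le C$, hence a uniform $L^\infty(0,T_0;L^1(\Omega))$ bound on $c$ and a uniform $W^{1,2}$ bound on $\widehat\Phi$. Here, though, I crucially need more: to bound $\Phi$ in $X=L^\infty(0,T_0;W^{1,\infty}(\Omega))$ I need an $L^\infty(0,T_0;L^p(\Omega))$ bound on $c$ with $p>N$. This is exactly where the quasilinear regularization pays off — the relation $h'(r)=r\psi''(r)$ makes the dissipation control in \eqref{lem:entr:2:1bis}, expanded as in \eqref{eq:expl:diss}, contain the term $\eta\int_{Q_{T_0}}|\nabla(c_i^{p/2})|^2$ up to constants, which combined with the mass control and \eqref{entropy:2} gives the desired $L^\infty(0,T_0;L^p)$ estimate (independent of $\sigma$), and then elliptic regularity gives the $X$-bound.

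The main obstacle I anticipate is precisely extracting that uniform $L^\infty(0,T_0;L^p(\Omega))$ bound for $c$ cleanly from Lemma~\ref{lem:entropy}: one must track how the $\eta r^p$ term in $\psi$ and the $\eta T_M(r)$ truncation interact, confirm the constants in \eqref{entropy}--\eqref{lem:entr:2:1bis} are genuinely $\eta$- and $\sigma$-independent (as stated), and convert the space-time dissipation estimate on $\nabla(c_i^{p/2})$ plus the pointwise-in-time entropy bound $\int_\Omega c_i^p\le C$ into the $L^\infty$-in-time statement one actually feeds into elliptic regularity; the bookkeeping with the $\sqrt\sigma$ scaling of $\widehat\Phi$ and the boundary term handled by integration by parts in the proof of Lemma~\ref{lem:entropy} is delicate but routine. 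Once the bound is in place, Leray--Schauder delivers a fixed point $\Phi=\TT\Phi$, the corresponding $c$ has the asserted regularity $c\in L^\infty(Q_{T_0})\cap L^2(0,T_0;W^{1,2}(\Omega))$ with $\pa_t c\in L^2(0,T_0;W^{-1,2}(\Omega))$ by construction, $\Phi\in L^\infty(0,T_0;W^{2,p}(\Omega))$ by elliptic regularity, \eqref{eq:prop1:main} holds by passing to the limit in \eqref{eq:fix:weak}, and \eqref{approx:phi} holds pointwise since $\sum_i z_i c_i\in L^\infty$ makes $\widehat\Phi$ a strong solution. This completes the proof.
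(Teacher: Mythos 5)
Your proposal follows the paper's proof essentially step for step: the same Leray--Schauder scheme for the same map $\TT$ on $X=L^\infty(0,T_0;W^{1,\infty}(\Omega))$, compactness via elliptic regularity for \eqref{eq:LS:phi} plus time-regularity (the paper differentiates \eqref{eq:LS:phi} in $t$, which is the rigorous form of your ``transfer of the $W^{-1,2}$ bound on $\partial_t c$''), continuity via the uniqueness supplied by Lemma~\ref{lem:dual}, and the $\lambda$-uniform a priori bound on fixed points of $\lambda\TT$ from the entropy of Lemma~\ref{lem:entropy} with the $\sigma$-scaling. There is, however, one concrete mismatch with the statement you are proving: in the a priori estimate step you require an $L^\infty(0,T_0;L^p(\Omega))$ bound on $c$ with $p>N$, whereas the lemma assumes only $p\geq 2$, $p>N/2$; in particular the case $N=3$, $p=2$, which is admissible under \eqref{hyp:ic} and is the relevant one for Theorem~\ref{th1}, is not covered by your argument, since $W^{2,2}(\Omega)$ only embeds into $W^{1,6}(\Omega)$ there. (The paper's own proof is terse on exactly this point: it invokes $W^{2,p}(\Omega)\inj W^{1,\infty}(\Omega)$ ``recall that $p>N/2$'', although Morrey requires $p>N$ for that embedding.) To get the full range $N/2<p\leq N$ you need one more bootstrap: the entropy bound gives $c$ in $L^\infty(0,T_0;L^p(\Omega))$ uniformly in $\lambda$, hence by elliptic regularity $\nabla\Phi$ is bounded in $L^\infty(0,T_0;L^q(\Omega))$ with $q=Np/(N-p)>N$; a drift of this integrability suffices to run the truncation argument of Lemma~\ref{supbound} (suitably adapted, since there $A\in L^\infty$ is assumed) and obtain a $\lambda$-uniform $L^\infty(Q_{T_0})$ bound on $c$, after which $\TT\Phi$ is bounded in $L^\infty(0,T_0;W^{2,q}(\Omega))$ for every finite $q$ and the $X$-bound follows. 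Without such a step, you should state your result with the stronger hypothesis $p>N$.

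A smaller point: the $L^\infty(0,T_0;L^p(\Omega))$ bound on $c$ is read off directly from \eqref{entropy:2}, because $\psi(r)\geq\frac{\eta}{p-1}r^p$; there is no need to route it through the dissipation term $\eta\int|\nabla c_i^{p/2}|^2$ and mass control as you suggest. Note also that this bound is of size $C/\eta$: it is uniform in $\lambda$ (and $\sigma$) but \emph{not} in $\eta$, which is harmless here since $\eta$ is fixed throughout the construction of the approximate solution; your insistence on $\eta$-independence at this stage is unnecessary, as the $\eta$-independent information from Lemmas~\ref{lem:entropy} and~\ref{lem:entr:2} is only exploited later, when passing to the limit $\eta^n\to0$ in the proofs of Theorems~\ref{th1} and~\ref{th2}.
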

\begin{proof}
It is sufficient to show that $\TT$, as defined in~(\ref{def:fix:map}), has a fixed point. In order to do so we will use the Leray-Schauder fixed-point theorem. \saut
Let us first show that $\TT$ maps bounded sets into relatively compact ones. To this end, suppose $(\Phi^n)_{n\in\N}$ is a bounded sequence in $X$ and $(c^n,\widehat \Phi^n)$ is the corresponding solution of (\ref{eq:LS:c}), (\ref{eq:LS:phi}). As indicated in the construction of the solution to (\ref{def:fix:map}), the $L^2(0,T_0;W^{-1,2}(\Omega))$-norm of $\pa_tc^n$ and the $L^\infty(Q_{T_0})$-norm of $c^n$ only depend on the $L^\infty(0,T_0;W^{1,\infty}(\Omega))$-norm of $\Phi^n$. Then, by differentiating (\ref{eq:LS:phi}) with respect to $t$, we see that $\pa_t\widehat \Phi^n$ is bounded in $L^2(0,T_0;W^{1,2}(\Omega))$ and $\widehat \Phi^n$ is bounded in $L^\infty(0,T_0;W^{2,q}(\Omega))$ for any $q<\infty$. Then from~\cite[Corollary~4]{simon86}, it follows that $(\widehat \Phi^n)$ is relatively compact in $X$, whence the compactness of $\TT$.
\saut
To prove continuity of $\TT$, let $\Phi^n\rightarrow \Phi$ in $X$. As a consequence $\{\widehat\Phi^n=\TT\Phi^n,\;n\in\N\}$ is relatively compact in $X$. Let $\widehat\Phi$ be a limit point. Similarly as before, the estimates  sketched above and the results in \cite{simon86} guarantee that $(c^n)_{n\in\N}$ is bounded in $L^\infty(Q_{T_0})\cap L^2(0,T_0;W^{1,2}(\Omega))$ and relatively compact in $L^2(Q_{T_0})$. Therefore, we may extract a subsequence that converges a.e.\ and in any $L^q(Q_{T_0})$ for $q<+\infty$ to a limit $c$, and such that $\nabla c^n\rightarrow \nabla c$ weakly in $L^2(Q_{T_0})$. Then we pass to the limit $n\rightarrow +\infty$ in (\ref{eq:LS:c}) and using uniqueness, $c$ is {\it the} solution in $L^\infty(Q_{T_0})\cap L^2(0,T_0;W^{1,2}(\Omega))$ of (\ref{eq:LS:c}) with data $\Phi$. Then we pass to the limit $n\rightarrow +\infty$ in equation (\ref{eq:LS:phi}), which yields $\widehat \Phi =\TT\Phi$. The only possible limit point for $(\TT\Phi^n)_{n\in\N}$ is $\TT\Phi$ and $(\TT\Phi^n)_{n\in\N}$ lies in a compact subset of $X$, so $\TT\Phi^n\rightarrow \TT\Phi$, whence the continuity of $\TT$.
\saut
For the a priori estimate, let $\la \in(0,1]$, $\Phi\in X$, and let $(c,\TT\Phi)$ be the corresponding solution of (\ref{approx:ci})-(\ref{approx:phi}). Assume $\Phi=\la \TT \Phi$. From Lemma~\ref{lem:entropy}, we can bound the $L^\infty(0,T_0;L^p(\Omega))$-norm of $c$ independently of $\la$. As a consequence, the $L^\infty(0,T_0;W^{2,p}(\Omega))$-norm of $\TT\Phi$ is bounded independently of $\la$ via~(\ref{eq:LS:phi}). Because of the embedding $W^{2,p}(\Omega)\inj W^{1,\infty}(\Omega)$ (recall that $p>N/2$), $\Phi$ is bounded in $X$. Therefore, any solution of $\Phi=\la \TT \Phi$ is a priori bounded in $X$. According to Leray-Schauder's theorem, $\TT$ has a fixed point $\Phi$ and the corresponding $(c,\Phi)$ satisfies (\ref{approx:ci})-(\ref{approx:phi}) in the sense of Lemma~\ref{lem:prop}.
\end{proof}


The energy estimates contained in the following lemma will allow us to pass to the limit as $\eta\rightarrow 0$ in (\ref{approx:ci})-(\ref{approx:phi}) in the proofs of Theorems~\ref{th1} and~\ref{th2}.

\begin{lemma}\label{lem:entr:2}
Let the premises of Lemma~\ref{lem:entropy} be satisfied with $\si=1$.
\begin{enumerate}[label=$(\roman{*})$]
\item For $\zeta\in C^\infty_c(\Omega,\R_+)$, there exists $C=C(\zeta,T_0)>0$ such that
\begin{equation}\label{lem:entr:2:2}
 \int_{Q_{T_0}} \left(\frac{|\nabla c_i|^2}{c_i}\zeta^2 + \frac{|\eta\nabla c_i^p+z_ic_i\nabla\Phi|^2}{c_i}+\eta|\nabla c_i^{p/2}|^2+|\Delta\Phi|^2\right)\zeta^2 \leq C.
\end{equation}
\item If $N\leq3$ and if, in addition, $\xi\in L^q(\pa\Omega)$ for some $q>2$, then there is a constant $C=C(T_0)>0$ such that
\begin{equation}\label{eq:lem:diss}
 \int_{Q_{T_0}} \frac{|\nabla c_i|^2}{c_i} + \frac{|\eta\nabla c_i^p+z_ic_i\nabla\Phi|^2}{c_i}+\eta|\nabla c_i^{p/2}|^2+|\Delta\Phi|^2 \leq C.
\end{equation}
\end{enumerate}
\end{lemma}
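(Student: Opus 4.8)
The plan is to derive both assertions by revisiting the dissipation identity~(\ref{lem:entr:2:1bis}) from Lemma~\ref{lem:entropy} and extracting the individual nonnegative terms after expanding the square. Since we have a lower bound $d_i\ge\underline{d}(T_0)>0$ and an upper bound $d_i\le\overline{d}(T_0)$, the bound
\[
\sum_{i=1}^P\int_{Q_{T_0}}\frac{1}{d_ic_i}\big|d_i\nabla h(c_i)+d_iz_ic_i\nabla\Phi\big|^2\le C
\]
is equivalent, up to constants, to a bound on $\sum_i\int_{Q_{T_0}}\tfrac{1}{c_i}|\nabla h(c_i)+z_ic_i\nabla\Phi|^2$. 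Writing $\nabla h(c_i)=\nabla c_i+\eta\nabla c_i^p$, I would expand the square as
\[
\frac{|\nabla c_i|^2}{c_i}+\frac{|\eta\nabla c_i^p+z_ic_i\nabla\Phi|^2}{c_i}+\frac{2}{c_i}\nabla c_i\cdot(\eta\nabla c_i^p+z_ic_i\nabla\Phi).
\]
The cross term splits further: $\tfrac{2\eta}{c_i}\nabla c_i\cdot\nabla c_i^p=2\eta p\,c_i^{p-2}|\nabla c_i|^2=\tfrac{8p}{p-1}\eta|\nabla c_i^{p/2}|^2\ge0$ is nonnegative (so it helps), and the remaining genuinely indefinite piece is $2z_i\nabla c_i\cdot\nabla\Phi$, which is the term requiring integration by parts and is the source of all difficulty.

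For part (i), I would multiply the whole integrand by a fixed cutoff $\zeta^2$ with $\zeta\in C_c^\infty(\Omega,\R_+)$ and integrate by parts the only bad term:
\[
\int_{Q_{T_0}}2z_i\nabla c_i\cdot\nabla\Phi\,\zeta^2
= -\int_{Q_{T_0}}2z_ic_i\,\zeta^2\,\Delta\Phi
  -\int_{Q_{T_0}}2z_ic_i\,\nabla(\zeta^2)\cdot\nabla\Phi,
\]
with no boundary contribution since $\zeta$ is compactly supported. Summing over $i$ and using $-\Delta\Phi=\sum_iz_ic_i$, the leading term becomes $\int_{Q_{T_0}}2\zeta^2|\Delta\Phi|^2\ge0$, which I would absorb: actually it appears with the good sign and I would keep a positive fraction of it on the left, producing the $|\Delta\Phi|^2\zeta^2$ contribution on the left-hand side of~(\ref{lem:entr:2:2}). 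The remaining lower-order term $\int 2z_ic_i\nabla(\zeta^2)\cdot\nabla\Phi$ is controlled by Young's inequality together with the mass control $c\in L^\infty(0,T_0;L^1(\Omega))$, the $L^\infty(0,T_0;W^{1,2}(\Omega))$-bound on $\Phi$ from Lemma~\ref{lem:entropy}/Remark~\ref{w12}, and the $\eta$-independence of all these constants; the $\eta|\nabla c_i^{p/2}|^2\zeta^2$ term simply survives from the expansion. Collecting terms yields~(\ref{lem:entr:2:2}).

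For part (ii), the whole point is to run the same expansion without a cutoff, so the integration by parts in $x$ now produces a genuine boundary term $-\int_{\Sigma_{T_0}}2z_ic_i\,\partial_\nu\Phi$ which must be absorbed. Using the Robin condition $\partial_\nu\Phi=\xi-\tau\Phi$ this becomes $-\int_{\Sigma_{T_0}}2z_ic_i(\xi-\tau\Phi)$, and since $\tau\ge0$, $c_i\ge0$ the $\tau\Phi$ piece needs care while the $\xi$ piece must be estimated. The key input here is that $N\le3$ and the improved regularity $\xi\in L^q(\partial\Omega)$, $q>2$: by the trace theorem $W^{1,2}(\Omega)\hookrightarrow L^4(\partial\Omega)$ in dimension three (in fact $L^s$ for all $s<4$), so $\Phi\big|_{\partial\Omega}\in L^4(\Sigma_{T_0})$, and one needs a matching bound on the trace of $c_i$. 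This is where one must exploit the partial information already in hand — I expect to control $\|c_i\|$ on $\Sigma_{T_0}$ via $\sqrt{c_i}\in L^2(0,T_0;W^{1,2}_{loc})$ upgraded to global $W^{1,2}$ by first establishing that $\Delta\Phi\in L^2(Q_{T_0})$ globally (bootstrapping from part (i) plus an absorption argument), then $\nabla\Phi\in L^2(0,T_0;W^{1,2})\subset L^2(0,T_0;L^6)$, and re-running the estimate. The main obstacle is precisely closing this loop: the boundary term couples the trace of $c$ (which one only knows through $\sqrt{c}\in W^{1,2}$, hence $c\in L^3$ on the boundary in 3D) with the trace of $\nabla\Phi$, and one must check that the exponents from the Sobolev/trace embeddings in dimension $N\le3$ leave room to absorb everything into the left-hand side while keeping the constant independent of $\eta$. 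Once the boundary term is absorbed, the remaining bulk terms are handled exactly as in part (i), giving~(\ref{eq:lem:diss}); Remark~\ref{rem:3d} should be cited for why $q>2$ rather than $q=2$ is needed and why $N\le3$ is essential.
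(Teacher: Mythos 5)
Your overall skeleton coincides with the paper's (expand the dissipation bound~(\ref{lem:entr:2:1bis}), integrate the cross term $2z_i\nabla c_i\cdot\nabla\Phi$ by parts, use $-\Delta\Phi=\sum_i z_ic_i$, cutoff for (i), Robin condition plus traces for (ii)), but two steps do not hold as written. In part (i), your control of the commutator term $\int_{Q_{T_0}}\sum_i 2z_ic_i\,\nabla(\zeta^2)\cdot\nabla\Phi$ by ``Young plus mass control plus the $L^\infty(0,T_0;W^{1,2}(\Omega))$ bound on $\Phi$'' fails: mass control (or even the $L\log L$ bound from $V$) only gives $c_i\in L^\infty(0,T_0;L^1(\Omega))$, and paired with $\nabla\Phi\in L^\infty(0,T_0;L^2(\Omega))$ this does not even make $c_i|\nabla\Phi|$ integrable, let alone with an $\eta$-independent constant; and any per-species Young splitting leaves a term like $\int c_i|\nabla\Phi|^2\zeta^2$, which the left-hand side does not control separately (it only controls $|\eta\nabla c_i^p+z_ic_i\nabla\Phi|^2/c_i$). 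The fix is the same substitution you already used for the leading term: after summing over $i$, replace $\sum_i z_ic_i$ by $-\Delta\Phi$ in the commutator as well, so it becomes $\int\Delta\Phi\,(\nabla\Phi\cdot\nabla\zeta^2)$, and Young absorbs $\tfrac12\int|\Delta\Phi|^2\zeta^2$ into the retained good term while $\int|\nabla\Phi|^2|\nabla\zeta^2|^2/\zeta^2\le C$ by the energy bound; this is exactly~(\ref{eq:entr:III}). (Minor: $\tfrac{2\eta}{c_i}\nabla c_i\cdot\nabla c_i^p=\tfrac{8\eta}{p}|\nabla c_i^{p/2}|^2$, not $\tfrac{8p}{p-1}\eta|\nabla c_i^{p/2}|^2$; harmless.)

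In part (ii) the decisive step is missing. You correctly reduce to bounding from below the boundary term $\int_{\Sigma_{T_0}}\sum_i z_ic_i(\xi-\tau\Phi)$, but you then leave the absorption open (``one must check that the exponents \dots leave room'') and the bootstrap you propose cannot supply it: part (i) is purely local, so it gives no global $L^2(Q_{T_0})$ bound on $\Delta\Phi$ — and since $\Delta\Phi=-\sum_i z_ic_i$, such a bound is essentially an $L^2(Q_{T_0})$ bound on $c$, which is precisely what is not available uniformly in $\eta$. Moreover, once the Robin condition is used, no trace of $\nabla\Phi$ appears, so the coupling you worry about is not the obstruction. The paper closes the estimate by a direct absorption into the Fisher-information term: for $N\le3$ the trace $W^{1,2}(\Omega)\to L^r(\partial\Omega)$ is compact for $r<4$, giving the $\epsilon$-inequality~(\ref{W12embedd}); apply it to $w=\sqrt{c_i}$ after H\"older on the boundary with exponent $r=2q/(q-1)<4$ against $\xi\in L^q(\partial\Omega)$ (this is exactly where $q>2$ enters) and with $r=8/3$ against $\Phi\in L^\infty(0,T_0;L^4(\partial\Omega))$ (which follows from the $W^{1,2}$ energy bound of Lemma~\ref{lem:entropy}). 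Together with mass control this yields $\int_{\Sigma_{T_0}}c_i|\xi|+\tau c_i|\Phi|\le\alpha\|\nabla\sqrt{c_i}\|^2_{L^2(Q_{T_0})}+C$, and choosing $\alpha$ small absorbs this into the term $4\int_{Q_{T_0}}|\nabla\sqrt{c_i}|^2$ already present on the left of the expanded~(\ref{lem:entr:2:1bis}); no bootstrap and no a priori global bound on $\Delta\Phi$ is needed (that bound comes out of the same expansion). Also note that the trace information on $c_i$ obtained this way is $c_i\in L^2(\partial\Omega)$ (from $\sqrt{c_i}\in L^4(\partial\Omega)$), not $L^3$. Without this absorption argument, your part (ii) describes the difficulty rather than resolving it.
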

\begin{proof}
Throughout the proof, $C$ always denotes a positive constant that may depend on $T_0,\zeta$ and on the data of $(\ref{approx:ci})$-$(\ref{approx:phi})$, but not on $\eta$. \saut
To prove $(i)$, recall that $d_i$ is bounded from below and from above on $Q_{T_0}$ by positive constants $\underline d(T_0),\overline d(T_0)$ and $\zeta \in L^\infty(\Omega)^+$, so there exists $C=C(T_0)>0$ such that
\begin{align}
C&\ge \sum_{i=1}^P\int_{Q_{T_0}} \frac{1}{c_i}|\nabla c_i+\eta\nabla c_i^p+z_ic_i\nabla \Phi|^2 \zeta^2\nonumber\\
	&=\sum_{i=1}^P\int_{Q_{T_0}} \frac{|\nabla c_i|^2}{c_i}\zeta^2+\frac{|\eta\nabla c_i^p+z_ic_i\nabla \Phi|^2}{c_i}\zeta^2+2\frac{\nabla c_i}{c_i}(\eta\nabla c_i^p+z_ic_i\nabla \Phi)\ze^2\nonumber\\
	&=\sum_{i=1}^P\int_{Q_{T_0}} \frac{|\nabla c_i|^2}{c_i}\zeta^2+\frac{|\eta\nabla c_i^p+z_ic_i\nabla \Phi|^2}{c_i}\zeta^2+\frac8p\eta|\nabla c_i^{p/2}|^2\ze^2+2z_i\nabla c_i\cdot \nabla\Phi\ze^2.\label{eq:entr:I}
\end{align}
It is sufficient to show that $\sum_i\int_{Q_{T_0}}z_i\nabla c_i\cdot \nabla \Phi\zeta^2$ is bounded from below. To this end, we employ integration by parts and equation~(\ref{approx:phi}) to obtain
\begin{align}
 \sum_{i=1}^P\int_{Q_{T_0}}z_i\nabla c_i\cdot\nabla \Phi\zeta^2&	= -\int_{Q_{T_0}}\sum_{i=1}^Pz_ic_i\Delta \Phi\zeta^2-\int_{Q_{T_0}}\sum_{i=1}^Pz_ic_i\nabla \Phi\cdot \nabla\zeta^2,\nonumber\\
								&\ge\int_{Q_{T_0}}|\Delta\Phi|^2 \zeta^2-\Big|\int_{Q_{T_0}}\Delta\Phi(\nabla \Phi\cdot\nabla\zeta^2)\Big|.\label{eq:entr:II}
\end{align}
For the last term, we then compute with Young's inequality
\begin{equation}\label{eq:entr:III}
\left|\int_{Q_{T_0}}\Delta\Phi(\nabla \Phi\cdot\nabla\zeta^2)\right|\leq\frac12\int_{Q_{T_0}}|\Delta\Phi|^2\zeta^2+C\int_{Q_{T_0}}|\nabla\Phi|^2\frac{|\nabla\zeta^2|^2}{\ze^2}\leq \frac12\int_{Q_{T_0}}|\Delta\Phi|^2\zeta^2+C
\end{equation}
for some $C>0$, where we used Lemma \ref{lem:entropy} for the uniform bound of $\nabla\Phi$ in $L^2(\Omega)$. Combining (\ref{eq:entr:I})-(\ref{eq:entr:III}) yields~(\ref{lem:entr:2:2}).\saut

For the proof of~$(ii)$, recall that $4|\nabla\sqrt{c_i}|^2=\frac{|\nabla c_i|^2}{c_i}$ and $0<\underline d(T_0)\leq d_i\leq\overline d(T_0)<\infty$ in $Q_{T_0}$. In contrast to the proof of $(i)$, we omit multiplication with a test function $\zeta^2$ and expand (\ref{lem:entr:2:1bis}) directly to obtain
\begin{equation}\label{eq:diss:est:1}
\sum_{i=1}^P\int_{Q_{T_0}}4|\nabla \sqrt{c_i}|^2+\frac{|\eta\nabla c_i^p+z_ic_i\nabla \Phi|^2}{c_i}+\frac8p\eta|\nabla c_i^{p/2}|^2+2z_i\nabla c_i \cdot \nabla\Phi\leq C.
\end{equation}
Here, we integrate by parts the last term on the left-hand side
\begin{equation}\label{eq:diss:est:2}
\int_{Q_{T_0}}\sum_{i=1}^Pz_i\nabla c_i\cdot \nabla\Phi= -\int_{Q_{T_0}}\sum_{i=1}^Pz_ic_i\Delta\Phi+\sum_{i=1}^P\int_{\Sigma_{T_0}}z_ic_i\pa_\nu\Phi=\int_{Q_{T_0}}|\Delta\Phi|^2+\int_{\Sigma_{T_0}}\sum_{i=1}^P z_ic_i(\xi-\tau\Phi).
\end{equation}
We will now prove that the last boundary integral can be estimated in dimension $N=3$.

Recall for the following that the map $u\in W^{1,2}(\Omega)\to u_{|_{\partial\Omega}}\in L^r(\partial\Omega)$ is continuous for $r=4$ and compact for $r<4$ so that, if $r<4$, for all $\epsilon>0$, there exists $C=C(\epsilon)$ such that
\begin{equation}\label{W12embedd}
\forall w\in W^{1,2}(\Omega),\; \|w\|_{L^r(\partial\Omega)}\leq \epsilon\|\nabla w\|_{L^2(\Omega)}+C\|w\|_{L^2(\Omega)}.
\end{equation}
We can estimate from below the $c_i\xi$-terms of (\ref{eq:diss:est:2}) by H\"{o}lder's inequality as follows
$$\int_{\Sigma_{T_0}}c_i\xi\geq -\int_{\Sigma_{T_0}}|c_i\xi|\geq -\|\sqrt{c_i}\|^2_{L^2(0,T_0;L^r(\pa\Omega))}\|\xi\|_{L^q(\pa\Omega)},$$
where $r=2q/(q-1)<4$ (recall that $\xi\in L^q(\partial\Omega), q>2$). Applying (\ref{W12embedd}) to $w=\sqrt{c_i}$ and using the mass control, we have for all $\alpha>0$
\begin{equation}\label{eq:est:int:xi}
\int_{\Sigma_{T_0}}c_i\xi\geq -\alpha\|\nabla \sqrt{c_i}\|_{L^2(Q_{T_0})}^2-C\|\sqrt{c_i}\|_{L^2(Q_{T_0})}^2\geq-\alpha \|\nabla \sqrt{c_i}\|_{L^2(Q_{T_0})}^2-C.
\end{equation}
Similarly for the $c_i\Phi$-term of (\ref{eq:diss:est:2}), we use H\"{o}lder's inequality with $r=8/3, q=4$ and we remember that, by  Lemma~\ref{lem:entropy}, $\|\Phi\|_{L^\infty(0,T_0;W^{1,2}(\Omega))}$ is bounded independently of $\eta$. We then obtain
\begin{align}
\int_{\Sigma_{T_0}} c_i\Phi&\geq -\int_{\Sigma_{T_0}}|c_i\Phi|\geq-\|\sqrt{c_i}\|_{L^2(L^{8/3}(\pa\Omega))}^2\|\Phi\|_{L^\infty(L^4(\pa\Omega))}\nonumber\\
	&\geq-C\|\sqrt{c_i}\|_{L^2(L^{8/3}(\pa\Omega))}^2\|\Phi\|_{L^\infty(W^{1,2}(\Omega))} \geq-\alpha \|\nabla \sqrt{c_i}\|_{L^2(Q_{T_0})}^2-C\|\sqrt{c_i}\|_{L^2(Q_{T_0})}^2\nonumber\\
	&\geq -\alpha \|\nabla \sqrt{c_i}\|_{L^2(Q_{T_0})}^2-C.\label{eq:diss:est:4}
\end{align}
Finally, by choosing $\alpha$ small enough, (\ref{eq:diss:est:1}), (\ref{eq:diss:est:2}), (\ref{eq:est:int:xi}),  (\ref{eq:diss:est:4}) give~(\ref{eq:lem:diss}).

\end{proof}

\begin{remark}\label{rem:3d}
The crucial ingredient of the proof of Lemma~\ref{lem:entr:2} $(ii)$ is the compactness embedding (\ref{W12embedd}). In order to use an appropriate compactness argument for (\ref{eq:est:int:xi}), the adjustment of the integrability of $\xi$ is needed in~$(ii)$. If one pursues the same strategy for controlling the boundary integral in~(\ref{eq:diss:est:4}) in dimension 4, the corresponding embedding becomes sharp, thus no obvious absorption as in (\ref{eq:diss:est:4}) seems possible for $N>3$.
\end{remark}

\begin{remark}\label{rem:gen:3d}
In view of the generalization mentioned in Remark~\ref{rem:data}~\ref{rem:gen}, note that the incorporation of a Dirichlet boundary part does not change anything in the proof of Lemma~\ref{lem:entr:2}~$(i)$, whereas for~$(ii)$ it is necessary to have information on $\pa_\nu\Phi$ on the whole boundary $\pa\Omega$ (cf.\ equation~(\ref{eq:diss:est:2})).
\end{remark}

\section{Proof of Theorem \ref{th1}.  }\label{S4}
Let us introduce two sequences $T^n\to\infty$ and $\eta^n\searrow 0$ as $n\to\infty$. For $n\in\N$, let $d_i^n$, $f_i^n$, $\tau^n$, $\xi^n$ and $c^{0n}$ satisfy conditions~(\ref{ass:add:1}) such that $d_i^n(t,x)\to d_i(t,x)$ almost everywhere with $\inf d_i\leq d_i^n\leq \sup d_i$, $f_i^n(t,x,y)\to f_i(t,x,y)$ uniformly on compact sets, $c_i^{0n}\to c_i^0$ in $L^2(\Omega)$, $\tau^n\to \tau$ a.e.\  with $\|\tau^n\|_{L^{\infty}(\partial\Omega)}\leq \|\tau\|_{L^\infty(\partial\Omega)}$ and $\xi^n\to\xi$ in $L^2(\pa\Omega)$. We denote by $(c^n,\Phi^n)$ a solution of $(\ref{approx:ci})$-$(\ref{approx:phi})$ on $Q_{T^n}$ with parameters $p$ and $\eta^n$ and data $d_i^n$, $f_i^n$, $\tau^n$, $\xi^n$, $c^{0n}$ and we write $J_i^n =-d_i^n\nabla h(c_i^n)-d_i^nz_ic_i^n \nabla \Phi^n$.

We will show that, up to a subsequence, $(c_i^n,\Phi^n)$ converges in an appropriate sense to a solution of the limit problem and in such a way that $J_i^n$ converges weakly in $L^1(Q_T)$ to $-d_i[\nabla c_i+z_ic_i\nabla\Phi]$.\\

\noindent{\bf Step 1: convergence of $c^n,\Phi^n$.} From mass control, $ \sqrt{c_i^n}$ is bounded in $L^\infty(0,T;L^2(\Omega))$ for all $T\in (0,\infty)$. From Lemma~\ref{lem:entr:2}~$(i)$,

\begin{equation}\label{sqrt(c)}
2\nabla \sqrt{c_i^n}=\frac{\nabla c_i^n}{\sqrt{c_i^n}} \mbox{ is bounded in }L^2(0,T;L^2_{loc}(\Omega))\;{\rm for \;all}\; T\in (0,\infty) .\end{equation}

Let us consider the sequence $\sqrt{c_i^n+1}$. Recall that $J_i^n/\sqrt{c_i^n}$ is bounded in $L^2(Q_T)$ by Lemma \ref{lem:entropy}~$(iii)$, and so is $J_i^n/\sqrt{c_i^n+1}$. It follows that
\begin{align}
2\partial_t \sqrt{c_i^n+1}&=\frac{\partial_t c_i^n}{\sqrt{c_i^n+1}}=\frac{-\divv J_i^n}{\sqrt{c_i^n+1}}+\frac{f_i^n(c^n)}{\sqrt{c_i^n+1}}\nonumber\\
	&=-\divv\Big(\frac{J_i^n}{\sqrt{c_i^n+1}}\Big)-\frac{J_i^n\cdot\nabla c_i^n}{2(c_i^n+1)^{3/2}}+\frac{f_i^n(c^n)}{\sqrt{c_i^n+1}}\nonumber\\
&=-\divv\Big(\frac{J_i^n}{\sqrt{c_i^n+1}}\Big)-\frac1{2\sqrt{c_i^n+1}}\frac{J_i^n}{\sqrt{c_i^n+1}}\cdot\frac{\nabla c_i^n}{\sqrt{c_i^n+1}}+\frac{f_i^n(c^n)}{\sqrt{c_i^n+1}},\label{eq:comp:ci:t}
\end{align}
hence $\partial_t \sqrt{c_i^n+1}$
is bounded in $L^1\big(0,T;W^{-1,2}(\Omega)+L^1_{loc}(\Omega))\big)$ for all $T\in (0,\infty)$ (using Schwarz inequality for the middle term). Since $\sqrt{c_i^n+1}$ is bounded in $L^2(0,T;W^{1,2}_{loc}(\Omega))$ for all $T\in (0,\infty)$, we deduce with~\cite{simon86} that $\sqrt{c_i^n+1}$ is relatively compact in $L^2(K)$ for all compact subset $K\subset [0,\infty)\times\Omega$. By a standard diagonal process, this provides compactness in $L^2_{loc}([0,\infty);L^2_{loc}(\Omega))$ and therefore $c_i^n$ is relatively compact in $L^1_{loc}([0,\infty);L^1_{loc}(\Omega))$. We may assume that, up to a subsequence, $c^{n}$ converges  a.e.\ in $(0,\infty)\times \Omega$ as well. According to Lemma~\ref{lem:entropy}, $c_i^{n}\log c_i^{n}$ is bounded in $L^\infty(0,T;L^1(\Omega))$; so Vitali's theorem guarantees that $c_i^n$ actually converges in $L^1(Q_T)$ for all $T\in (0,\infty)$. In particular, up to a subsequence again, we may assume that $c^n(t)$ converges in $L^1(\Omega)$ for a.e.\ $t$. Moreover, for some $M<\infty$,
its limit $c$ satisfies
\begin{equation}\label{clogc}
\forall i=1,...,P,\; a.e.\,t\in (0,T),\; \;\int_\Omega c_i(t)|\log c_i(t)|\leq M.
\end{equation}
And the solution $\Phi^n$ of
$$-\Delta \Phi^n(t)=\sum_{i=1}^Pz_ic_i^n(t),\;\;\partial_\nu\Phi^n+\tau^n\Phi^n=\xi^n,$$
which is bounded in $L^\infty(0,T;W^{1,2}(\Omega))$, converges for a.e.\ $t$, weakly in $W^{1,2}(\Omega)$, to the solution $\Phi$ of the expected limit problem in the variational sense (\ref{eq:th1:2}). Moreover, this convergence holds also weakly in $L^q(0,T;W^{1,2}(\Omega))$ for any $q<+\infty$ and $T<+\infty$. Note also that $\Phi\in L^2(0,T;W^{2,2}_{loc}(\Omega))$ by (\ref{lem:entr:2:2}) in Lemma \ref{lem:entr:2}.\\

\noindent{\bf Step 2: Convergence of $J_i^n$ to $-d_i(\nabla c_i+z_ic_i\nabla\Phi)$.} Since, on one hand, $J_i^n/\sqrt{c_i^n}$ is weakly relatively compact in $L^2(Q_T)$ by (\ref{lem:entr:2:1bis}) in Lemma \ref{lem:entropy}, and, on the other hand, $ \sqrt{c_i^n}$ converges strongly in $L^2(Q_T)$, up to a subsequence, $J_i^n=\sqrt{c_i^n}\left[J_i^n/\sqrt{c_i^n}\right]$ converges weakly in $L^1(Q_T)$ for all $T\in (0,\infty)$.

{\bf To identify the limit $J_i$}, let us analyze the convergence of the individual terms in $J_i^n =-d_i^n\nabla c_i^n-\eta^n d_i^n\nabla(c_i^n)^p-d_i^nz_ic_i^n \nabla \Phi^n$.

First, $\sqrt{c_i^n}$ converges to $\sqrt{c_i}$ in $L^2(Q_T)$ and using $(\ref{sqrt(c)})$, up to a subsequence, $\nabla \sqrt{c_i^n}$ weakly converges in $L^2(0,T;L^2_{loc}(\Omega))$, so that $\nabla c_i^n=2\sqrt{c_i^n}\nabla\sqrt{c_i^n}$ weakly converges in $L^1(0,T;L^1_{loc}(\Omega))$ and the limit is necessarily $\nabla c_i$. Since $d_i^n$ converges pointwise almost everywhere and stays bounded, it follows that $d_i^n\nabla c_i^n\to d_i\nabla c_i$ weakly in $L^1(0,T;L^1_{loc}(\Omega))$ as well.

As a consequence of this, $J_i^n+d_i^n\nabla c_i^n$ is also weakly relatively compact in $L^1(0,T;L^1_{loc}(\Omega))$. Let us show that
\begin{equation}\label{JJ}
\lim_{n\to\infty} J_i^n+d_i^n\nabla c_i^n=-z_id_ic_i\nabla\Phi.
\end{equation}
Remark that
$$J_i^n+d_i^n\nabla c_i^n=d_i^n(\eta^n\nabla (c_i^n)^p+z_ic_i^n\nabla \Phi^n)=d_i^n\sqrt{c_i^n}\frac{\eta^n\nabla (c_i^n)^p+z_ic_i^n\nabla \Phi^n}{\sqrt{c_i^n}}.$$
From Lemma~\ref{lem:entr:2} $(i)$, we know that the quotient on the right-hand side is weakly relatively compact in $L^2(0,T;L^2_{loc}(\Omega))$. Let us show that it satisfies
\begin{equation}\label{Ji2}
\lim_{n\to\infty} \frac{\eta^n\nabla (c_i^n)^p+z_ic_i^n\nabla \Phi^n}{\sqrt{c_i^n}}=z_i\sqrt{c_i}\nabla\Phi.
\end{equation}
Since $\sqrt{c_i^n}\to \sqrt{c_i}$ strongly in $L^2(Q_T)$ and $d_i^n\to d_i$ a.e.\ with uniform bound, (\ref{JJ}) will then follow. It is actually sufficient to prove that (\ref{Ji2}) holds in the sense of distributions.

From the weak $L^2$-convergence $\nabla\Phi^n\to\nabla\Phi$ and the strong $L^2$-convergence $\sqrt{c_i^n}\to\sqrt{c_i}$, we have $\sqrt{c_i^n}\nabla\Phi^n\to\sqrt{c_i}\nabla\Phi$ weakly in $L^1(Q_T)$. For the remaining term, note that $\sqrt{c_i^n}\nabla (c_i^n)^p=\frac{p-\frac12}{p}\nabla(c_i^n)^{p-1/2}$ and recall that $\eta^n\int_\Omega (c_i^n)^p$ is bounded independently of $\eta^n$ from Lemma~\ref{lem:entropy}. For an arbitrary test function $\ph\in C_0^\infty(\Omega)$, we compute
\begin{align*}
\left|\int_\Omega \eta^n\pa_{x_k}(c_i^n)^{p-\frac12}\ph\right|&=\eta^n\left|\int_\Omega(c_i^n)^{p-\frac12}\pa_{x_k}\ph\right|\leq \eta^n\int_\Omega (c_i^n)^{p-\frac12}\|\nabla\ph\|_{L^\infty(\Omega)}\\
	&\leq C\eta^n\left(\int_\Omega (c_i^n)^p\right)^{\frac{p-\frac12}p}\|\nabla\ph\|_{L^\infty(\Omega)}\leq C(\eta^n)^{1/2p}\|\nabla\ph\|_{L^\infty}\overset{n\to\infty}\longrightarrow0.
\end{align*}
As a result, we see that $\eta^n \sqrt{c_i^n}\nabla (c_i^n)^p$ tends to zero in a distributional sense. Those arguments finally yield (\ref{JJ}) and $J_i=-d_i\nabla c_i-d_iz_ic_i \nabla \Phi$. Note in passing that we proved $c_i\in L^1(0,T;W^{1,1}_{loc}(\Omega))$.\\

\noindent{\bf Step 3: $(c_i,\Phi)$ is a solution.} We already proved that $\Phi$ is a solution of (\ref{eq:th1:2}). For $c^n$, we may pass to the limit in the approximate variational problem (\ref{eq:prop1:main}), namely: for all $T>0$ and all test-functions $\psi$ with $\psi(T)=0$
\begin{equation*}
 \int_{Q_{T_0}}-c_i^n\partial_t\psi -J_i^n\nabla \psi =\int_{\Omega} c_i^{0n} \psi(0)+\int_{Q_{T_0}}f_i^n(c^n)\psi.
\end{equation*}
Note that $f_i^n(c^n)$ converges pointwise a.e.\ to $f_i(c)$ and stays bounded. Together with the $L^1(Q_T)$-convergence of $c_i^n$ and the weak $L^1(Q_T)$-convergence of $J_i^n$, this implies that $(c_i,\Phi)$ is solution of (\ref{eq:th1:1}) in Theorem \ref{th1}.\\

The last point to prove Theorem \ref{th1} is\\
\noindent{\bf Step 4: $c_i\in C([0,\infty); L^1(\Omega))$.} Note that by $c_i(t)-c_i^0+\int_0^tdiv\,J_i=\int_0^tf_i(c)$, we see that $c_i$ has a continuous representation from $[0,\infty)$ into $W^{-1,1}(\Omega)$. Coupled with the estimate (\ref{clogc}), we may deduce that $t\to c(t)$ is also continuous for the weak topology of $L^1(\Omega)$. Actually, we will now prove that $t\to c_i(t)\in L^1(\Omega)$ is continuous. This is not so obvious. Actually, we will prove that, for each $k\geq 2$
\begin{equation}\label{contTk}
t\in [0,\infty) \to T_k(c_i(t))\in L^1(\Omega) \;{\rm has \;right- \;and \;left-limits\; at \;each\;} t_0\in [0,\infty),
\end{equation}
where $T_k:[0,\infty)\to [0,\infty)$ is a concave $C^2$-function such that $0\leq T_k'\leq 1$ and
$$\forall r\in [0,k],\;T_k(r)=r,\; \forall r\in (k+1,\infty),\;T_k(r)=k+1/2.$$
Thus, the continuity of $t\to c_i(t)\in L^1(\Omega)$ will follow. Indeed, let us write
$$\int_\Omega|c_i(t)-c_i(s)|\leq\int_\Omega |c_i(t)-T_k(c_i(t))|+|T_k(c_i(t)-T_k(c_i(s))|+|T_k(c_i(s))-c_i(s)|.$$

By the estimate (\ref{clogc}) and the definition of $T_k$
$$\int_\Omega|c_i(t)-T_k(c_i(t)|\leq \int_{[c_i(t)\geq k]}c_i(t)\leq \frac{1}{\log k}\int_{[c_i(t)\geq k]}c_i(t)\log c_i(t)\leq \frac{M}{\log k},$$
and the same with $t$ replaced by $s$. Thus, assuming (\ref{contTk}), we have for all $t_0\in [0,\infty)$
$$\limsup_{t,s\to t_0, t>s>t_0}\int_\Omega |c_i(t)-c_i(s)|\leq \frac{2M}{\log k}.$$
Letting $k\to\infty$, this proves that $t\to c_i(t)$ has a right-limit at each $t_0$ and similarly for the left limits. Therefore, the continuity of $t\to c(t)$ holds with values in $L^1(\Omega)$ and not only with values in $W^{-1,1}(\Omega)$.\\

The proof of (\ref{contTk}) will be done in several steps (in the same spirit as in \cite{Rolland1}).  Let $S\in C^2([0,\infty))$ such that $S'(r)=0$ for $r$ large. From the equation in $c_i^n$, we have
\begin{equation}\label{S}
\partial_tS(c_i^n)+div\,\left(S'(c_i^n)J_i^n\right)=S''(c_i^n)\nabla c_i^nJ_i^n+S'(c_i^n)f_i^n(c^n).
\end{equation}
Recall that $J_i^n/\sqrt{c_i^n}$ is bounded in $L^2(Q_T)$ so that, for all $k>0$,
\begin{equation}\label{Ji}
\int_{[c_i^n\leq k]}(J_i^n)^2\leq k\int_\Omega \frac{(J_i^n)^2}{c_i^n}\leq C\,k.
\end{equation}
Since $\nabla \Phi^n$ is bounded in $L^2(Q_T)$, we deduce that
\begin{equation}\label{nablaci}
\int_{[c_i^n\leq k]}|\nabla c_i^n|^2\leq \int_{[c_i^n\leq k]}[|\nabla c_i^n|\left(1+\eta p(c_i^n)^{p-1}\right)]^2= \int_{[c_i^n\leq k]}\left|\frac{J_i^n}{d_i^n}-z_ic_i^n\nabla\Phi^n\right|^2\leq C(k).
\end{equation}
It follows from these estimates that the right-hand side $\mu_n:=S''(c_i^n)\nabla c_i^nJ_i^n+S'(c_i^n)f_i^n(c^n)$ is bounded in $L^1(Q_T)$. Up to a subsequence, it converges to a finite measure  $\mu$ (depending on $S$). We will remember that, from passing to the limit in (\ref{S}), we may write for a.e.\ $0<s<t<T$ and for all $\psi\in C^\infty(\overline{\Omega})$:
\begin{equation}\label{SS}
\left\{
\begin{array}{l}
\int_\Omega \psi[S(c_i(t))-S(c_i(s))]+\nabla\psi\int_s^tW(\sigma)d\sigma=\int_{(s,t)\times\Omega}\psi\,d\mu,\\
{\rm where\;} W\in L^2(Q_T)^N.
\end{array}
\right.
\end{equation}
The fact that $W= \lim_{n\to\infty}S'(c_i^n)J_i^n\in L^2(Q_T)^N$ follows from (\ref{Ji}) and the choice of $S$. Note also that it follows from (\ref{nablaci}) that $S(c_i^n)$ is bounded in $L^2(0,T; W^{1,2}(\Omega))$. Let us choose $S=T_k$ in (\ref{SS}). Letting $s,t$ decrease to $t_0\in [0,T)$, we deduce that the limit as $t$ decreases to $t_0$ of $t\to T_k(c_i(t))$ exists in the sense of distributions. Since $T_k(c_i(t))$ is bounded in $L^\infty(\Omega)$, this limit also holds weakly in $L^2(\Omega)$ (at least). Let us denote it by $v(t_0)$. To prove that the convergence holds strongly in $L^2(\Omega)$, it is sufficient to prove that
\begin{equation}\label{strongg}
\limsup_{t\to t_0^+}\int_\Omega T_k(c_i(t))^2\leq\int_\Omega v(t_0)^2,
\end{equation}
and this will end the proof of {\bf Step 4} (up to the same analysis for the left limits).

Note first that, by letting $s$ decrease to $t_0$ in (\ref{SS}), we have for a.e. $t>t_0$
$$\int_\Omega \psi[T_k(c_i(t))-v(t_0)]+\nabla\psi \cdot \int_{t_0}^t\nabla W(\sigma)d\sigma=\int_{]t_0,t)\times\Omega}\psi\,d\mu.$$
Applying this to a sequence $\psi=\psi_p$ converging to $w(t):=T_k(c_i(t))$, we obtain
$$\int_\Omega w(t)^2-w(t)v(t_0)+\nabla w(t)\cdot \int_{t_0}^t\nabla W(\sigma)d\sigma\leq \|w(t)\psi\|_{L^\infty}\int_{]t_0,t]\times\Omega}\,d|\mu|.$$
We would like to pass to the limit as $t\to t_0^+$ in this estimate, but it is not obvious how to control the gradient term. Thus we integrate this equation in $t$ from $t_0$ to $t_0+h$ to obtain
$$\frac{1}{h}\int_{t_0}^{t_0+h}\int_\Omega w(t)^2dt-\int_\Omega v(t_0)\frac{1}{h}\int_{t_0}^{t_0+h}w(t)dt+\int_\Omega\frac{1}{h}\int_{t_0}^{t_0+h}\!\!dt\left\{\nabla w(t)\! \cdot \! \int_{t_0}^t\nabla W(\sigma)d\sigma\right\}\leq C\int_{]t_0,t_0+h]\times\Omega}\,d|\mu|.$$
Since $\frac{1}{h}\int_{t_0}^{t_0+h}w(t)dt$ converges weakly in $L^2(\Omega)$ to $v(t_0)$, if we are able to show that the gradient term tends to zero, we will deduce
\begin{equation}\label{average}
\limsup_{t\to t_0^+}\frac{1}{h}\int_{t_0}^{t_0+h}\int_\Omega w(t)^2dt\leq \int_\Omega v(t_0)^2.
\end{equation}
To estimate the gradient term, we use
$$\left|\int_{t_0}^t\nabla W(\sigma)d\sigma\right|\leq h^{1/2}\left[\int_{t_0}^{t_0+h}|\nabla W|^2(\sigma)d\sigma\right]^{1/2}\!\!\!,\;\; \left|\int_{t_0}^t\nabla w(t)dt\right|\leq h^{1/2}\left[\int_{t_0}^{t_0+h}|\nabla w|^2(t)dt\right]^{1/2}\!\!\!,$$
so that
$$ \left|\int_\Omega\frac{1}{h}\int_{t_0}^{t_0+h}\!\!dt\left\{\nabla w(t)\!\cdot \! \int_{t_0}^t\nabla W(\sigma)d\sigma\right\}\right|\leq \left[\int_{(t_0, t_0+h)\times\Omega}|\nabla W|^2(\sigma)d\sigma\right]^{1/2}\left[\int_{(t_0,t_0+h)\times\Omega}|\nabla w|^2(t)dt\right]^{1/2}\!\!\!.$$
This proves, as expected, that the gradient term tends to zero as $h\to 0$, whence (\ref{average}).

Next, to deduce (\ref{strongg}) from (\ref{average}), it is sufficient to know that $\lim_{t\to t_0^+}\int_\Omega w(t)^2$ exists. This may be checked by applying (\ref{SS}) with $S:=(T_k)^2$ and $\psi\equiv 1$. We then obtain for a.e.\ $0<s<t<T$
$$\left|\int_\Omega w(t)^2-w(s)^2\right|\leq\int_{(s,t)\times\Omega}d|\mu|,$$
and the right-hand side tends to zero as $t,s$ decrease to $t_0$.

\qed

\section{Proof of Theorem~\ref{th2}.}\label{S5}

The proof is the same as for Theorem~\ref{th1}, except that, having Lemma~\ref{lem:entr:2} $(ii)$ at hand, we are able to obtain compactness results in better spaces and up to the boundary of $\Omega$.

We use the same approximation as in Theorem~\ref{th1}, except that we moreover ask that $\xi^n\to\xi$ in $L^q(\pa\Omega)$. We have at least convergence in the same function spaces for the approximate solutions $(c^n,\Phi^n)$. But now, $\nabla \sqrt{c_i^n}$ is bounded in $L^2(Q_T)$ (that is up the boundary of $\Omega$). By Sobolev embedding, it follows that $\sqrt{c_i^n}$ is bounded in $L^2(0,T; L^6(\Omega))$ and $c_i^n$ is therefore bounded in $L^1(0,T;L^3(\Omega))$. Moreover, we may write
\[\|\nabla c_i^n\|_{L^1(0,T;L^{3/2}(\Omega))}\leq\|(c_i^n)^{-1/2} \nabla c_i^n\|_{L^2(Q_T)}\|\sqrt{c_i^n}\|_{L^2(0,T;L^6(\Omega)  )}\leq C,\]
so that $c_i\in L^1(0,T;W^{1,\frac32}(\Omega))$; here we use the fact that, since $\nabla c_i^n$ converges weakly in $L^1(Q_T)$ to $\nabla c_i$, a barycentric sequence converges strongly in $L^1(Q_T)$ and a.e.\ to the same $\nabla c_i$ and this sequence remains bounded in $L^1(0,T; L^{\frac32}(\Omega))$ as well. Whence $\nabla c_i\in L^1(0,T;L^{\frac32}(\Omega))$ by Fatou's Lemma.\\

With respect to $\Phi^n$ and $\Phi$, Lemma~\ref{lem:entr:2} says that $\Delta \Phi^n$ is bounded in $L^2(Q_T)$. We deduce that $\Phi\in L^2(0,T;W^{2,2}(\Omega))$. Since $c_i\in C([0,\infty);L^1(\Omega))$, it follows from the equation in $\Phi$ that $\Phi\in C([0,\infty);L^1(\Omega))$ (at least!).
But $\Phi$ being bounded in $L^\infty(0,T;W^{1,2}(\Omega))$, which is embedded into $L^\infty(0,T; L^6(\Omega))$, it follows that $\Phi\in C([0,\infty); L^r(\Omega))$ for $r\in [1,6)$.\\
\qed

\section{Appendix}
\begin{lemma} \label{supbound} Let $w$ be a regular nonnegative solution to
$$\partial_tw-div\left(d\,\nabla h(w)+ d\,w\,A\right)=g\;on\;Q_T,\;\partial_\nu h(w)+wA\cdot\nu=0\;on\;\Sigma_T,$$
where $d,g\in L^\infty(Q_T)$ with $0<d_m\leq d\leq d_M<\infty$,
$A\in L^\infty(Q_T, \R^N)$ and $h(w)=w+\eta T_M(w)$ where $T_M$ is a regular nondecreasing function. Then: $\|w\|_{L^\infty(Q_T)}\leq K$ where $K$ depends only on $\|g\|_{L^\infty(Q_T)}, \|A\|_{L^\infty(Q_T)}, \|w(0)\|_{L^\infty(\Omega)}, d_m, d_M, T$.
\end{lemma}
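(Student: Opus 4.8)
The plan is to establish the bound by the classical Moser--De~Giorgi iteration on powers of $w$. The one structural feature that makes everything work uniformly is the relation $h'(r)=1+\eta\,T_M'(r)\ge 1$: it keeps the principal part coercive with a constant independent of $\eta$ and $M$, so the final constant $K$ will depend only on $\|g\|_{L^\infty(Q_T)}$, $\|A\|_{L^\infty(Q_T)}$, $\|w(0)\|_{L^\infty(\Omega)}$, $d_m$, $d_M$, $T$ (and on the fixed $\Omega$, $N$), but not on $\eta$ or $M$. All computations below are legitimate because $w\ge 0$ is assumed regular.

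First I would derive the basic energy inequality. For $q\ge 2$, multiply the equation by $w^{q-1}$ and integrate over $\Omega$; the boundary term vanishes by the no-flux condition $(\nabla h(w)+wA)\cdot\nu=0$. Using $d\ge d_m$ and $h'\ge 1$ (simply discarding the nonnegative term $\eta T_M'(w)\,w^{q-2}|\nabla w|^2$), the diffusion term is bounded below by $\tfrac{4(q-1)}{q^2}d_m\int_\Omega|\nabla w^{q/2}|^2$. I would rewrite the migration term as $\tfrac{2(q-1)}{q}\int_\Omega d\,w^{q/2}A\cdot\nabla w^{q/2}$ and handle it by Young's inequality, absorbing half of the gradient term at the cost of a zero-order term whose coefficient grows only polynomially in $q$; the source is controlled by $\|g\|_{L^\infty}\int_\Omega(w^q+1)$. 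With $v:=w^{q/2}$ this yields, after multiplying through by $q$, a differential inequality of the form
\begin{equation*}
\frac{d}{dt}\|v(t)\|_{L^2(\Omega)}^2+d_m\,\|\nabla v(t)\|_{L^2(\Omega)}^2\ \le\ C_0\,q^2\Big(\|v(t)\|_{L^2(\Omega)}^2+1\Big),
\end{equation*}
with $C_0$ depending only on $d_m,d_M,\|A\|_{L^\infty},\|g\|_{L^\infty},|\Omega|$. At the base level $q=2$, Gronwall's lemma together with $w(0)\in L^\infty(\Omega)$ yields an a~priori bound on $\|w\|_{L^2(Q_T)}$ in terms of the data; this is the starting value of the iteration.

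Next I would iterate using the parabolic embedding $L^\infty(0,T;L^2(\Omega))\cap L^2(0,T;W^{1,2}(\Omega))\hookrightarrow L^{2\kappa}(Q_T)$ with $\kappa=\tfrac{N+2}{N}>1$, along the exponents $q_m=2\kappa^m$. Integrating the differential inequality at level $q=q_m$ over $(0,T)$ and observing that $\int_0^T\|v(s)\|_{L^2(\Omega)}^2\,ds=\|w\|_{L^{q_m}(Q_T)}^{q_m}$ is \emph{already known} from the previous step --- so that no Gronwall is invoked at high levels, which is precisely what keeps the constants from blowing up --- I would bound $\|v\|_{L^\infty(0,T;L^2)}^2+\|\nabla v\|_{L^2(Q_T)}^2$ by $C\,q_m^2\big(\|w\|_{L^{q_m}(Q_T)}^{q_m}+\|w(0)\|_{L^\infty}^{q_m}|\Omega|+1\big)$. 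Feeding this into the embedding and raising to the power $\kappa$ gives, with $J_m:=\max\{\|w\|_{L^{q_m}(Q_T)}^{q_m},\ \|w(0)\|_{L^\infty}^{q_m}\max(1,|\Omega|),\ 1\}$, a recursion $J_{m+1}\le(\widehat C\,q_m^2\,J_m)^{\kappa}$ with $\widehat C$ depending only on the data. Taking logarithms turns this into $\log J_{m+1}\le\kappa\log J_m+\kappa(a+bm)$, whose solution satisfies $\log J_m\le\kappa^m(\log J_0+S)$ with $S<\infty$; hence $\|w\|_{L^{q_m}(Q_T)}\le J_m^{1/q_m}\le e^{(\log J_0+S)/2}$, a bound independent of $m$. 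Letting $m\to\infty$ gives $\|w\|_{L^\infty(Q_T)}\le K$ with $K$ of the required form. One could equally well invoke the standard recursion lemma of \cite{LSU} in place of the explicit summation.

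The main obstacle is bookkeeping rather than any deep point: I must track precisely how the constants in the energy inequality depend on the exponent $q$ (essentially through the Young inequality applied to $d\,w^{q/2}A\cdot\nabla w^{q/2}$ and to the source), verify that this dependence is at worst polynomial so that the factors $q_m^{2/q_m}\to 1$ do not spoil the uniform bound, and check at every step that no constant hides a dependence on $\eta$ or $M$ --- which, again, is guaranteed by using only $h'\ge 1$ and discarding the extra nonnegative gradient contribution.
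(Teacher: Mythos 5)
Your argument is correct, but it is not the route the paper takes in its Appendix. You run a Moser iteration: multiply by $w^{q-1}$, use $h'\ge 1$ to discard the $\eta T_M$ contribution, absorb the drift by Young's inequality with constants growing polynomially in $q$, and then climb the exponents $q_m=2\kappa^m$ via the parabolic embedding $L^\infty(0,T;L^2)\cap L^2(0,T;W^{1,2})\hookrightarrow L^{2\kappa}(Q_T)$, $\kappa=\tfrac{N+2}{N}$, avoiding Gronwall at high levels so that the $q$-dependent factors only enter as $q_m^{2/q_m}\to 1$; this closes, and all constants are indeed independent of $\eta$ and $M$ because only $h'\ge1$ and the sign of the discarded term are used. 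The paper instead proves the bound by a De Giorgi/Stampacchia level-set argument: it first rescales $W=e^{-\omega t}w$ with $\omega\ge d_M\|A\|_{L^\infty}^2+\|g\|_{L^\infty}^2$ so that the zero-order term absorbs the drift and source contributions, then tests with the truncations $W_k=(W-k)^+$ for $k\ge\|w(0)\|_{L^\infty}+1$ to obtain a single energy estimate of the form $\sup_t\int_\Omega W_k^2+d_m\int_{Q_T}|\nabla W_k|^2\le C k^2\,\mathrm{meas}[W\ge k]$, and finally invokes the classical iteration lemma (Theorem II.6.1 and Remark II.6.1 of \cite{LSU}) to conclude. In fact the paper's main text mentions your power-multiplication scheme as ``the usual technique'' and relegates the explicit proof to the truncation argument. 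The trade-off is that your route is self-contained but requires the careful bookkeeping of the $q$-dependence that you flag yourself, whereas the paper's route compresses all iteration into the quoted LSU lemma at the price of the exponential time rescaling and a citation; both exploit exactly the same structural facts ($h'\ge1$, $T_M$ nondecreasing, no-flux boundary condition), so either yields the stated $K$.
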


\begin{proof} First, we write the equation satisfied by $W:=e^{-\omega t} w$ where $\omega>0$ will be chosen large enough later:
$$\partial_t W-div\left(d\,e^{-\omega t}\nabla h(e^{\omega t}W)+ d\,W\,A\right)+\omega W=e^{-\omega t}g\;on\;Q_T,\;e^{-\omega t}\partial_\nu h(e^{\omega t}W)+W\,A\cdot\nu=0\;on\;\Sigma_T.$$
Let $k\in [0,\infty)$. We set $W_k:=(W-k)^+$ and we multiply the equation by $W_k$ to obtain
$$\partial_t\frac{1}{2}\int_\Omega W_k^2+\int_\Omega d|\nabla W_k|^2\leq \int_\Omega -d\,W\nabla W_k\cdot A-\omega W_kW+\|g\|_{L^\infty}W_k.$$
(Here we used $\int_\Omega\nabla W_k\nabla W^p\geq 0$ and $h'\geq 1$). By Young's inequality
$$-\int_\Omega dW\nabla W_k\cdot A\leq \int_\Omega \frac{d}{2}|\nabla W_k|^2+\frac{d_M}{2}\|A\|_{L^\infty}^2\int_{[W(t)\geq k]}W^2,$$
$$\int_{[W(t)\geq k]}\omega\,k\,W+\|g\|_{L^\infty}W_k\leq\int_{[W(t)\geq k]} \frac{\omega}{2}(W^2+k^2)+\frac{\|g\|_{L^\infty}}{2}(W^2+1).$$
We now choose $\omega\geq d_M\|A\|_{L^\infty}^2+\|g\|_{L^\infty}^2$ to get
\begin{equation}\label{allk}
\partial_t\int_\Omega W_k^2+{d_m}\int_\Omega |\nabla W_k|^2\leq \left[\omega k^2+\|g\|_{L^\infty}\right]\int_{[W(t)\geq k]}dx,
\end{equation}
where $[W(t)\geq k]=\{x\in\Omega; W(t)\geq k\}$. Choosing $k\geq \|W(0)\|_{L^\infty(\Omega)}+1=\|w(0)\|_{L^\infty(\Omega)}+1$, we obtain after integration in time
$$\sup_{t\in [0,T]}\int_\Omega W_k^2(t)+d_m\int_{Q_T}|\nabla W_k|^2\leq (\omega+\|g\|_{L^\infty})k^2\int_{[W\geq k]}dx\,dt,$$
where $[W\geq k]=\{(t,x)\in Q_T; W(t,x)\geq k\}$.
Now, we apply Theorem II.6.1 (with $r=q=2(N+2)N, \kappa=2/N$) together with Remark  II.6.1 in pages 102-103 of \cite{LSU} to obtain the conclusion of Lemma \ref{supbound}.
\end{proof}

\begin{lemma}\label{H-1} Under the assumptions of Lemma \ref{supbound}, we also have
$$\|\nabla w\|_{L^2(Q_T)}+\|\partial_t w\|_{L^2(0,T;W^{-1,2}(\Omega))}\leq K,$$
where $K$ depends on the same quantities as in Lemma \ref{supbound} and also on $p$.
\end{lemma}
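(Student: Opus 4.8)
The plan is a standard parabolic energy estimate, made quantitative, followed by reading off the time derivative from the equation in duality. Throughout, $C$ denotes a constant depending only on $\|g\|_{L^\infty(Q_T)}$, $\|A\|_{L^\infty(Q_T)}$, $\|w(0)\|_{L^\infty(\Omega)}$, $d_m$, $d_M$, $|\Omega|$, $T$ and (as explained below) $p$. By Lemma~\ref{supbound} we already have $\|w\|_{L^\infty(Q_T)}\le K$, which will be used repeatedly.

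First I would multiply the equation for $w$ by $w$ and integrate over $\Omega$. Integrating by parts turns $-\int_\Omega w\,\divv(d\nabla h(w)+d\,w\,A)$ into $\int_\Omega(d\nabla h(w)+d\,w\,A)\cdot\nabla w$ minus the boundary term $\int_{\partial\Omega} w\,d\,(\partial_\nu h(w)+w\,A\cdot\nu)$, which vanishes identically by the no-flux condition on $\Sigma_T$ (and $d>0$). Since $h'(r)=1+\eta T_M'(r)\ge 1$ ($T_M$ nondecreasing), one has $\nabla h(w)\cdot\nabla w=h'(w)|\nabla w|^2\ge|\nabla w|^2$, so the principal term is bounded below by $d_m\int_\Omega|\nabla w|^2$. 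The cross term is absorbed via Young's inequality, $\bigl|\int_\Omega d\,w\,A\cdot\nabla w\bigr|\le\tfrac{d_m}{2}\int_\Omega|\nabla w|^2+\tfrac{d_M^2\|A\|_{L^\infty}^2}{2d_m}\int_\Omega w^2$, and $\bigl|\int_\Omega g\,w\bigr|\le\|g\|_{L^\infty}|\Omega|^{1/2}\bigl(\int_\Omega w^2\bigr)^{1/2}$; the remaining $\int_\Omega w^2$-factors are at most $K^2|\Omega|$ by Lemma~\ref{supbound}. This yields $\tfrac{d}{dt}\int_\Omega w^2+d_m\int_\Omega|\nabla w|^2\le C$, and integrating over $(0,T)$ (using $\int_\Omega w(0)^2\le\|w(0)\|_{L^\infty(\Omega)}^2|\Omega|$) gives $\|\nabla w\|_{L^2(Q_T)}\le C$.

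Next I would upgrade this to a bound on $\nabla h(w)$. Since $\nabla h(w)=h'(w)\nabla w$ and $0\le w\le K$ a.e., we have $\|h'(w)\|_{L^\infty(Q_T)}\le\sup_{[0,K]}h'$; this supremum is finite and controlled by a constant depending on $p$ (recall $h'=1+\eta T_M'$ with $T_M$ built from $r\mapsto\min\{r^p,M\}$, whose derivative on the range $[0,K]$ of $w$ is $\le$ a constant depending on $p$ and $K$; for $M$ large $T_M(r)=r^p$ there), uniformly in $\eta\in(0,1)$. Hence $\|\nabla h(w)\|_{L^2(Q_T)}\le C$. Equivalently, one may multiply the equation by $h(w)$ and use the antiderivative $H$ of $h$, which is bounded on $[0,K]$, to obtain this bound in one stroke.

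Finally, rewriting the PDE as $\partial_t w=\divv(d\nabla h(w)+d\,w\,A)+g$ and pairing with $\varphi\in W^{1,2}(\Omega)$, the boundary term cancels once more by the no-flux condition, so $|\langle\partial_t w(t),\varphi\rangle|\le\bigl(\|d\nabla h(w)(t)\|_{L^2(\Omega)}+\|d\,w(t)\,A\|_{L^2(\Omega)}\bigr)\|\nabla\varphi\|_{L^2(\Omega)}+\|g(t)\|_{L^2(\Omega)}\|\varphi\|_{L^2(\Omega)}$. As $d\nabla h(w)$, $d\,w\,A$ and $g$ all lie in $L^2(Q_T)$ with bounds already established, $\|\partial_t w\|_{L^2(0,T;W^{-1,2}(\Omega))}\le C$, which together with the gradient estimate is the assertion. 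There is no genuine obstacle beyond bookkeeping; the one point requiring care is that the constant relating $\|\nabla h(w)\|_{L^2}$ to $\|\nabla w\|_{L^2}$ (that is, $\|h'(w)\|_{L^\infty}$) depends on $p$ --- precisely the extra dependence announced in the statement.
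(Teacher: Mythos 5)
Your proof is correct and follows essentially the same route as the paper: an $L^2$ energy estimate (testing with $w$, using $h'\ge 1$ and the $L^\infty$ bound from Lemma~\ref{supbound}) for $\|\nabla w\|_{L^2(Q_T)}$, then an $L^2(Q_T)$ bound on the flux $d\nabla h(w)+d\,w\,A$ via $h'(w)\le 1+p\|w\|_{L^\infty}^{p-1}$ (the source of the $p$-dependence), and finally duality against $W^{1,2}(\Omega)$ test functions to control $\partial_t w$ in $L^2(0,T;W^{-1,2}(\Omega))$. The only cosmetic difference is that the paper obtains the gradient bound by reusing inequality (\ref{allk}) with $k=0$ from the proof of Lemma~\ref{supbound}, whereas you rederive the same energy inequality directly.
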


\begin{proof} Writing (\ref{allk}) with $k=0$ gives the estimate on $\|\nabla w\|_{L^2(Q_T)}$ after integration in time. Next we see that
$$\|d\nabla h(w)+d\,w\,A\|_{L^2(Q_T)}\leq d_M\left\{[1+p\|w\|_{L^\infty}^{p-1}]\|\nabla w\|_{L^2(Q_T)}+|Q_T|\|w\|_{L^\infty}\|A\|_{L^\infty}\right\},$$
providing an $L^2(Q_T)$-estimate on $d\nabla h(w)+d\,w\,A$ in terms of the announced quantities. Whence the estimate of $\partial_t w$ in $L^2(0,T;W^{-1,2}(\Omega))$.
\end{proof}

\section*{Acknowledgement}
The second author was partly supported by the Center of Smart Interfaces, TU Darmstadt and Rennes M\'etropole.

\bibliography{literature}{}

\begin{thebibliography}{10}

\bibitem{Biler92}
P.~Biler.
\newblock Existence and asymptotics of solutions for a parabolic-elliptic
  system with nonlinear no-flux boundary conditions.
\newblock {\em Nonlinear Anal.}, 19(12):1121--1136, December 1992.

\bibitem{BD00}
P.~Biler and J.~Dolbeault.
\newblock Long time behavior of solutions of {N}ernst-{P}lanck and
  {D}ebye-{H}\"uckel drift-diffusion systems.
\newblock {\em Ann. Henri Poincar\'e}, 1(3):461--472, 2000.

\bibitem{BHN94}
P.~Biler, W.~Hebisch, and T.~Nadzieja.
\newblock The {D}ebye system: existence and large time behavior of solutions.
\newblock {\em Nonlinear Anal.}, 23(9):1189--1209, November 1994.

\bibitem{BFS12}
D.~Bothe, A.~Fischer, and J.~Saal.
\newblock Global well-posedness and stability of electro-kinetic flows.
\newblock {\em arXiv:1206.1457v2}, 2012.

\bibitem{BothePruss}
D.~Bothe and J.~Pr{\"u}ss.
\newblock Mass transport through charged membranes.
\newblock In {\em Proc. 4th European Conf. on Elliptic and Parabolic Problems
  (J.\ Bemelmans et al., ed.)}, pages 332--342. World Sci. Publ., 2002.

\bibitem{CL92}
Y.S. Choi and R.~Lui.
\newblock Analysis of an electrochemistry model with zero-flux boundary
  conditions.
\newblock {\em Appl. Anal.}, 49:277--288, 1993.

\bibitem{CL_multidim}
Y.S. Choi and R.~Lui.
\newblock Multi-dimensional electrochemistry model.
\newblock {\em Arch. Rational Mech. Anal.}, 130:315--342, 1995.

\bibitem{cussler}
E.L. Cussler.
\newblock {\em Diffusion: {M}ass {T}ransfer in {F}luid {S}ystems}.
\newblock Cambridge University Press, 2 edition, 1997.

\bibitem{deGrootMazur-book}
S.~R. de~Groot and P.~Mazur.
\newblock {\em Nonequilibrium thermodynamics}.
\newblock Dover Publications Inc., New York, 1984.
\newblock Reprint of the 1962 original.

\bibitem{DZC11}
C.~Deng, J.~Zhao, and S.~Cui.
\newblock Well-posedness for the {N}avier-{S}tokes-{N}ernst-{P}lanck-{P}oisson
  system in {T}riebel-{L}izorkin space and {B}esov space with negative indices.
\newblock {\em J. Math. Anal. Appl.}, 377(1):392--405, 2011.

\bibitem{denk}
R.~Denk, M.~Hieber, and J.~Pr{\"u}ss.
\newblock {$\RR$}-{B}oundedness, {F}ourier {M}ultipliers and {P}roblems of
  {E}lliptic and {P}arabolic type.
\newblock {\em Mem. Amer. Math. Soc.}, 166(788):1--114, 2003.

\bibitem{dhp}
R.~Denk, M.~Hieber, and J.~Pr{\"u}ss.
\newblock Optimal {$L^p-L^q$}-regularity for parabolic problems with
  inhomogeneous boundary data.
\newblock {\em Math. Z.}, 257:193--224, 2007.

\bibitem{dreyer12}
W.~Dreyer, C.~Guhlke, and R.~M{\"u}ller.
\newblock Overcoming the shortcomings of the {N}ernst-{P}lanck model.
\newblock {\em Phys. Chem. Chem. Phys.}, 15:7075--7085, 2012.

\bibitem{fischer13}
A.~Fischer.
\newblock {\em Well-posedness and asymptotic behaviour in reactive and
  electro-kinetic flow processes}.
\newblock phd thesis, Technische Universit{\"a}t Darmstadt, 2013.

\bibitem{FS13}
A.~Fischer and J.~Saal.
\newblock Global weak solutions in three space dimensions for electro-kinetic
  flow processes.
\newblock {\em In preparation}.

\bibitem{gaj85}
H.~Gajewski.
\newblock On existence, uniqueness and asymptotic behaviour of solutions of the
  basic equations for carrier transport in semiconductors.
\newblock {\em Z. Angew. Math. u. Mech}, 65:101--108, 1985.

\bibitem{gaj86}
H.~Gajewski and K.~Gr{\"o}ger.
\newblock On the basic equations for carrier transport in semiconductors.
\newblock {\em J. Math. Analysis Applic.}, 113:12--35, 1986.

\bibitem{GG96}
H.~Gajewski and K.~Gr{\"o}ger.
\newblock Reaction-diffusion processes of electrically charged species.
\newblock {\em Mathematische Nachrichten}, 177:109--130, 1996.

\bibitem{GS}
H.~Gajewski and Skrypnik I.V.
\newblock Existence and uniqueness results for reaction-diffusion processes of
  electrically charged species.
\newblock In {\em Progress in Nonlinear Differential Equations and Their
  Applications (M.\ Chipot, J.\ Escher, eds.)}, volume~64, pages 151--188.
  Birkh{\"a}user, 2005.

\bibitem{GGH95}
A.~Glitzky, K.~Gr{\"o}ger, and R.~H{\"u}nlich.
\newblock Free energy and dissipation rate for reaction diffusion processes of
  electrically charged species.
\newblock {\em Appl. Anal.}, 60:201--217, 1995.

\bibitem{Glitzky}
A.~Glitzky and R.~H{\"u}nlich.
\newblock Electro-reaction-diffusion systems for heterostructures.
\newblock {\em Z. Angew. Math. Mech.}, 77:823--832, 1997.

\bibitem{Glitzky_2}
A.~Glitzky and R.~H{\"u}nlich.
\newblock Global estimates and asymptotics for electro-reaction-diffusion
  systems in heterostructures.
\newblock {\em Appl. Anal.}, 66:205--226, 1997.

\bibitem{grisvard}
P.~Grisvard.
\newblock {\em Elliptic Problems in Nonsmooth Domains}.
\newblock Society for Industrial and Applied Mathematics, 1985.

\bibitem{JS09}
J.W. Jerome and R.~Sacco.
\newblock Global weak solutions for an incompressible charged fluid with
  multi-scale couplings: Initial-boundary-value problem.
\newblock {\em Nonlinear Analysis: Theory, Methods {$\&$} Applications},
  71(12):2487 -- 2497, 2009.

\bibitem{LSU}
O.~A. Lady{\v{z}}enskaja, V.~A. Solonnikov, and N.~N. Ural{'}ceva.
\newblock {\em Linear and quasilinear equations of parabolic type}.
\newblock Translated from the Russian by S. Smith. Translations of Mathematical
  Monographs, Vol. 23. American Mathematical Society, Providence, R.I., 1967.

\bibitem{LPR}
T.~Lepoutre, M.~Pierre, and G.~Rolland.
\newblock Global well-posedness of a conservative relaxed cross diffusion
  system.
\newblock {\em SIAM Journal on Mathematical Analysis}, 44(3):1674--1693, 2012.

\bibitem{Ne89}
W.~Nernst.
\newblock Die elektromotorische {W}irksamkeit der {I}onen.
\newblock {\em Z. f. Physikalische Chemie}, IV:129--181, 1889.

\bibitem{Newman}
J.S. Newman.
\newblock {\em Electrochemical systems}.
\newblock Prentice Hall, 2 edition, 1991.

\bibitem{Pl90a}
M.~Planck.
\newblock {\"U}ber die {E}rregung von {E}lectricit{\"a}t und {W\"a}rme in
  {E}lectrolyten.
\newblock {\em Annalen der Physik und Chemie}, XXXIX:161--186, 1890.

\bibitem{Pl90b}
M.~Planck.
\newblock {\"U}ber die {P}otentialdifferenz zwischen zwei verd{\"u}nnten
  {L\"o}sungen bin{\"a}rer {E}lectrolyte.
\newblock {\em Annalen der Physik und Chemie}, XL:561--576, 1890.

\bibitem{Rolland1}
G.~Rolland.
\newblock Global existence for a class of quadratic reaction-diffusion systems
  with nonlinear diffusions and $l^1$-initial data.
\newblock To appear.

\bibitem{rolland12}
G.~Rolland.
\newblock {\em Global existence and fast-reaction limit in reaction-diffusion
  systems with cross effects}.
\newblock pdh thesis, ENS Cachan-Bretagne, 2012.

\bibitem{Ryham}
R.~Ryham.
\newblock Existence, uniqueness, regularity and long-term behavior for
  dissipative systems modeling electrohydrodynamics.
\newblock {\em arXiv:0910.4973v1}, 2009.

\bibitem{Schmuck}
M.~Schmuck.
\newblock Analysis of the {N}avier-{S}tokes-{N}ernst-{P}lanck-{P}oisson system.
\newblock {\em {M}athematical {M}odels and {M}ethods in {A}pplied {S}ciences},
  19:993--1014, 2009.

\bibitem{SNE}
Z.~Schuss, B.~Nadler, and R.~S. Eisenberg.
\newblock Derivation of {P}oisson and {N}ernst-{P}lanck equations in a bath and
  channel from a molecular model.
\newblock {\em Phys. Rev. E}, 64:036116, Aug 2001.

\bibitem{Selberherr}
S.~Selberherr.
\newblock {\em Analysis and Simulation of Semiconductor Devices}.
\newblock Springer, Wien, New York, 1984.

\bibitem{simon86}
J.~Simon.
\newblock Compact sets in the space {$L^p(0,T;B)$}.
\newblock {\em Annali di Matematica Pura ed Applicata}, 146:65--96, 1987.

\end{thebibliography}
\bibliographystyle{plain}
\end{document}